\documentclass[11pt, reqno]{amsart}
\usepackage{setspace}
\usepackage{amsmath, amssymb, amscd, amsthm, amsfonts}
\usepackage{graphicx}
\usepackage{hyperref}
\usepackage{cleveref}
\usepackage{dirtytalk}

\usepackage{mathpazo}
\usepackage{indentfirst}
\usepackage{amssymb,amsmath,amsfonts,amsthm}
\usepackage[all]{xy}
\usepackage{enumerate}

\usepackage{graphicx}
\numberwithin{equation}{section}
\usepackage[all]{xy}
\usepackage{amssymb,amscd,amsthm,amsmath,graphicx,color}
\usepackage{mathrsfs}
\usepackage[english]{babel}
\usepackage[utf8x]{inputenc}
\usepackage{hyperref}

\newtheorem{theorem}{Theorem}[section]
\newtheorem*{proposition*}{Proposition}
\newtheorem{proposition}[theorem]{Proposition}
\newtheorem{corollary}[theorem]{Corollary}
\newtheorem{lemma}[theorem]{Lemma}
\newtheorem{example}[theorem]{Example}
\newtheorem{remark}[theorem]{Remark}
\newtheorem{definition}[theorem]{Definition}
\newtheorem{question}[theorem]{Question}

\DeclareMathOperator{\Hom}{Hom}
\DeclareMathOperator{\Ext}{Ext}
\DeclareMathOperator{\Tor}{Tor}

\DeclareMathOperator{\depth}{depth}

\DeclareMathOperator{\pd}{pd}

\DeclareMathOperator{\gdim}{G-dim}
\DeclareMathOperator{\id}{id}
\DeclareMathOperator{\gid}{Gid}

\begin{document} 

\title[Finite projective dimension]{Finite projective dimension and a question of Jorgensen}

\author[R. Holanda]{Rafael Holanda}
\address{Departamento de Matem\'atica, CCEN, Universidade Federal de Pernambuco, Recife, PE, 50740-560, Brazil}
\email{rafael.holanda@ufpe.br}

\author{Cleto B. Miranda-Neto}
\address{Departamento de Matemática, Universidade Federal da Paraíba - 58051-900, João Pessoa, PB, Brazil}
\email{cleto@mat.ufpb.br}

\date{\today}

\keywords{Finite projective dimension, homological dimension, Ext module, free module}
\subjclass[2020]{Primary 13D05, 13D07, 13H10, 13C10; Secondary 13C13, 13D02, 13D45.}

\begin{abstract} This paper studies finite projective dimension of finitely generated modules over a Noetherian local ring, by means of spectral sequence methods related to generalized local cohomology. Our main goal is to  address a question raised by D. Jorgensen over fifteen years ago, concerning a prescribed bound (via Ext vanishing) for projective dimension over a complete intersection local ring. We obtain similar results involving other homological dimensions as well. Also we make use of weakly full ideals to derive further criteria for prescribed bound on projective dimension.

\end{abstract}

\maketitle 

\section{Introduction}

A Noetherian local ring $(R, \mathfrak{m})$ is said to be a complete intersection if its  $\mathfrak{m}$-adic completion  is isomorphic to the quotient of a regular local ring by a (possibly zero) proper ideal generated by a regular sequence. Our motivation in this note is the following intriguing problem suggested by Jorgensen in 2008.

\begin{question}\label{Jquestion}{\rm (\cite[Question 1.7]{J})}\label{jquestion} Let $R$ be a complete intersection local ring of positive codimension, and let $M$ be a finitely generated $R$-module with finite projective dimension. Assume $\Ext^n_R(M,M)=0$ for some integer $n\geq 1$. Is it true that the projective dimension of $M$ is at most $n-1$?
\end{question}

Notice that we can assume $n\leq d={\rm dim}\,R$ (since $M$ has finite projective dimension). As far as we know, Jorgensen's problem remains unsettled in its generality and has only been confirmed in a very few special cases, as listed below ($R$, $M$  and $n$ are as in Question \ref{Jquestion}).

\begin{enumerate}

\item[{\rm (a)}] (\cite[Proposition 2.5]{J}) The case $n=2$.

\item[{\rm (b)}] (\cite[Proposition 5.4]{Dao})
$[M]=0$ in ${\overline G}(R)\otimes_{\mathbb Z}{\mathbb Q}$, where ${\overline G}(R)=G(R)/{\mathbb Z}\cdot [R]$ denotes the reduced Grothendieck group of $R$.

\item[{\rm (c)}] (\cite[Corollary 2.10]{Soto})
$R$ is an unramified hypersurface and $M$ is Cohen-Macaulay.

\end{enumerate}

In the present work, our main goal is to prove the following contribution towards a positive solution to Jorgensen's question, over a broader class of rings.

\begin{theorem} {\rm (Corollary \ref{janswer})} Assume that $R$ is a $($not necessarily complete intersection$)$ Gorenstein local ring. Then, Question \ref{jquestion} has an affirmative answer if, in addition,  $$\depth_R\Ext^q_R(M,M)\geq d-n-q \quad \mbox{for\, all} \quad q=0, \ldots, d-n.$$ 
\end{theorem}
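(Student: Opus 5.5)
The plan is to argue by contradiction, combining the generalized local cohomology spectral sequence with a duality for generalized local cohomology when the first argument has finite projective dimension. Set $p=\pd_R M$, which is finite with $p\le d$ by the Auslander--Buchsbaum formula. Suppose $p\ge n$. The standard fact to aim against is that $\Ext^p_R(M,M)\neq 0$. To see it, take a minimal free resolution $F_\bullet$ of $M$ of length $p$. Then $\Ext^p_R(M,M)$ is the cokernel of $\Hom(F_{p-1},M)\to\Hom(F_p,M)$. This map has image in $\mathfrak m\Hom(F_p,M)$, so the cokernel is nonzero by Nakayama. It therefore suffices to prove that $\Ext^j_R(M,M)=0$ for all $j\ge n$.

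\emph{Step 1 (duality).} Since $R$ is Gorenstein, the canonical module is $R$. For $M$ of finite projective dimension I would invoke Suzuki-type duality for generalized local cohomology:
$$H^j_{\mathfrak m}(M,N)\cong \Ext^{d-j}_R(N,M)^\vee .$$
Here $(-)^\vee$ denotes the Matlis dual. Equivalently, one can write $H^j_{\mathfrak m}(M,N)=H^j_{\mathfrak m}(M^*\otimes^{\mathbf L}_R N)$ with $M^*=\mathbf R\Hom_R(M,R)$ a perfect complex, and then apply local duality over the Gorenstein ring $R$. With $N=M$ this gives $H^j_{\mathfrak m}(M,M)\cong \Ext^{d-j}_R(M,M)^\vee$. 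In particular, $\Ext^{m}_R(M,M)=0$ for every $m>n$ is equivalent to $H^j_{\mathfrak m}(M,M)=0$ for all $j<d-n$.

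\emph{Step 2 (spectral sequence).} Use the Grothendieck spectral sequence
$$E_2^{i,q}=H^i_{\mathfrak m}(\Ext^q_R(M,M))\Rightarrow H^{i+q}_{\mathfrak m}(M,M).$$
The hypothesis $\depth_R\Ext^q_R(M,M)\ge d-n-q$ for $0\le q\le d-n$ gives $E_2^{i,q}=0$ whenever $i<d-n-q$. If $i+q<d-n$, then $q\le d-n-1$ and $i<d-n-q$, so the hypothesis applies to every such term. Hence every $E_2^{i,q}$ of total degree less than $d-n$ vanishes. Consequently the abutment $H^j_{\mathfrak m}(M,M)$ vanishes for every $j<d-n$.

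By Step 1, this yields $\Ext^m_R(M,M)=0$ for all $m>n$. Together with the hypothesis $\Ext^n_R(M,M)=0$, we get $\Ext^{j}_R(M,M)=0$ for all $j\ge n$, and in particular $\Ext^p_R(M,M)=0$, which is the desired contradiction. Hence $p\le n-1$.

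The main obstacle is Step 1: establishing the duality isomorphism for generalized local cohomology with precisely the right indexing, and making sure it holds over any Gorenstein local ring and not only a complete one. I would first reduce to the complete case, since generalized local cohomology and Ext commute with completion, and faithful flatness preserves both the vanishing and the depth hypotheses. I would then prove the isomorphism via $\mathbf R\Gamma_{\mathfrak m}\mathbf R\Hom_R(M,M)\simeq \mathbf R\Gamma_{\mathfrak m}(M)\otimes^{\mathbf L}_R M^*$ and local duality, using perfection of $M^*$ to commute the tensor with $\mathbf R\Hom$. The spectral sequence step is routine once the vanishing region is identified.
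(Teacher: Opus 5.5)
Your proof is correct and follows essentially the paper's route; the paper obtains this as the case $s=n$, $\omega_R=R$ of Theorem~\ref{prescribedpd}. As there, the spectral sequence of Lemma~\ref{ss2} together with $({\bf D}_n)$ kills $H^j_{\mathfrak m}(M,M)$ for $j<d-n$, and generalized local duality (Lemma~\ref{duality}) links $H^j_{\mathfrak m}(M,M)$ with $\Ext^{d-j}_R(M,M)$. The only difference is the final step. The paper dualizes the single vanishing $\Ext^n_R(M,M)=0$ to get $H^{d-n}_{\mathfrak m}(M,M)=0$, then concludes via Suzuki's nonvanishing $H^{\depth_R M}_{\mathfrak m}(M,M)\neq 0$ (Lemma~\ref{depthlemma}) and Auslander--Buchsbaum. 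You instead dualize back to get $\Ext^{>n}_R(M,M)=0$ and use $\Ext^{\pd_R M}_R(M,M)\neq 0$ (Lemma~\ref{pdfinite}); under the same duality these two nonvanishing statements correspond to each other.
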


As will be clear, this fact follows readily from a more general result (Theorem
\ref{prescribedpd}), established for the case where $R$ is Cohen-Macaulay and admits a canonical module. 

Note that, as ${\rm depth}_R0=+\infty$, the above depth condition is automatically satisfied if, for example, ${\rm depth}_R{\rm Hom}_R(M, M)\geq d-n$ and (for $n<d$, as we may clearly assume) $$\Ext^q_R(M,M)=0\quad \mbox{for\, all} \quad q=1, \ldots, d-n.$$

In Section \ref{otherhd}, we study connections between the vanishing of Ext modules and the finiteness of the injective dimension and the Gorenstein injective dimension of a module over a given Cohen-Macaulay local ring. The main result is Theorem \ref{prescribedpdhom} (see also Corollary \ref{prescribedpdhom-cor}). Another byproduct, which put us again in the scenario of finite projective dimension, is Corollary \ref{cor-e}.

Our interest in Section \ref{sectionfinitepd} meets some of the preceding ones, but via different methods. We use a special type of strongly rigid module, namely, ideals that are weakly full with respect to a suitable power of the maximal ideal (e.g., integrally closed ideals, provided that the ring has positive depth) to establish the prescribed bound on projective dimension given in Proposition
\ref{weakly-strongly}. 

\smallskip

For any unexplained terminology used in this note, we refer to \cite{BH}. As a matter of convention (which will be in force throughout the entire paper), by the expression \textit{ring} we  always mean commutative unitary Noetherian ring. We say that a module (over a given ring) is \textit{finite} whenever it is finitely generated. Over a local ring $(R, {\mathfrak m})$, by depth we always mean ${\mathfrak m}$-depth. As mentioned above, we set the depth of the zero module as $+\infty$.

\section{Finite projective dimension}\label{hdsection}

The projective dimension $\pd_RM$ of a finite module $M$ over a local ring $R$ is a fundamental classical invariant. Our main goal here is to address Jorgensen's problem (Question \ref{jquestion}) concerning a potential cohomological criterion for a prescribed upper bound on $\pd_RM$ to hold, based on a single Ext vanishing. Before tackling such problem, some preparation is in order.

The first auxiliary notion is that of generalized local cohomology, which plays an important role in this paper.

\begin{definition}\rm (\cite{H}) 
Let $R$ be a ring and $M,N$ be finite $R$-modules. Given an ideal $\mathfrak{a}$ of $R$ and an integer $i\geq 0$, the \emph{$i$th generalized local cohomology module of the pair $M$, $N$ with respect to $\mathfrak{a}$} is defined as $H_\mathfrak{a}^i(M,N)=\varinjlim\Ext^i_R(M/\mathfrak{a}^nM,N)$.
\end{definition}

Notice that by taking $M = R$ we retrieve the ordinary local cohomology module $H_\mathfrak{a}^i(N)$.

If $R$ is a local ring and  $M, N$ a pair of finite $R$-modules, we consider the quantity $$e_R(M,N)=\sup\{j\geq 0 \, \mid \, \Ext^j_R(M,N)\neq0\}.$$

\begin{lemma}{\rm (\cite[Lemma 1 (iii)]{M}})\label{pdfinite}
Let $R$ be a local ring and let $M, N$ be finite $R$-modules. If $\pd_RM<\infty$ and $N\neq0$, then $e_R(M,N)=\pd_RM$, that is, $e_R(M,N)=\depth R-\depth_RM$.
\end{lemma}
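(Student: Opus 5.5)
The plan is to use a minimal free resolution of $M$ and the Auslander--Buchsbaum formula. Set $p=\pd_RM<\infty$. The second equality, $p=\depth R-\depth_RM$, is exactly the Auslander--Buchsbaum formula. So it remains to show that $\Ext^j_R(M,N)=0$ for $j>p$ and that $\Ext^p_R(M,N)\neq 0$.

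The vanishing for $j>p$ is immediate. We compute $\Ext$ from a minimal free resolution
$$0\to F_p\xrightarrow{\partial_p}F_{p-1}\to\cdots\to F_0\to M\to 0,$$
which has length $p$, so the cohomology vanishes above degree $p$. This gives $e_R(M,N)\le p$. The degenerate case $M=0$, where $p=-\infty$ and the supremum is conventionally $-\infty$, is handled separately or excluded.

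For the nonvanishing, I would apply $\Hom_R(-,N)$ to the resolution. Since nothing maps into position $p$ from above, we get
$$\Ext^p_R(M,N)=\operatorname{coker}\bigl(\Hom_R(F_{p-1},N)\xrightarrow{\partial_p^*}\Hom_R(F_p,N)\bigr).$$
Write $F_p=R^{b}$ with $b\ge 1$ and $F_{p-1}=R^{a}$. Then $\partial_p^*$ is the map $N^{a}\to N^{b}$ given by the transpose of the matrix of $\partial_p$. By minimality, all entries of $\partial_p$ lie in $\mathfrak m$, so the image of $\partial_p^*$ lies in $\mathfrak m N^{b}$. Hence the cokernel surjects onto $N^{b}/\mathfrak m N^{b}=(N/\mathfrak m N)^{b}$. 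This is nonzero by Nakayama's lemma, because $N\neq 0$ is finite. Therefore $\Ext^p_R(M,N)\neq 0$, and combined with the vanishing above, $e_R(M,N)=p$.

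The only delicate point is this last step: one must use a minimal resolution, so that the matrix entries lie in $\mathfrak m$, and one must invoke Nakayama's lemma for the finite module $N$. Everything else is routine.
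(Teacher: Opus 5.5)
Your proposal is correct and follows essentially the same route as the source the paper relies on. The paper gives no proof of its own and simply cites Matsumura's Lemma 1(iii), whose proof is exactly your argument: vanishing above $p$ from the length of a minimal free resolution, and $\Ext^p_R(M,N)\neq 0$ because the minimal differential has entries in $\mathfrak m$, so the cokernel of $\partial_p^*$ surjects onto $(N/\mathfrak m N)^{b}\neq 0$ by Nakayama, with the Auslander--Buchsbaum formula giving the second equality.
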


\begin{lemma}{\rm (\cite[Proposition 2.1]{FJMS})}\label{ss2}
Let $(R, \mathfrak{m})$ be a local ring. If $M, N$ are finite $R$-modules, there exists a spectral sequence $H^p_\mathfrak{m}(\Ext_R^q(M,N))\Rightarrow_p H^{p+q}_\mathfrak{m}(M,N)$.
    
\end{lemma}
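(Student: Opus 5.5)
We have to prove that for finite modules $M,N$ over a local ring $(R,\mathfrak{m})$ there is a spectral sequence $E_2^{p,q}=H^p_\mathfrak{m}(\Ext^q_R(M,N))\Rightarrow_p H^{p+q}_\mathfrak{m}(M,N)$. The plan is to obtain it as the Grothendieck spectral sequence of a composite functor. For a fixed finite module $M$, consider the left exact functors $F=\Hom_R(M,-)$ and $G=\Gamma_\mathfrak{m}(-)$ on $R$-modules.

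The first step is to identify $H^i_\mathfrak{m}(M,-)$ with the right derived functors of $G\circ F=\Gamma_\mathfrak{m}\Hom_R(M,-)$. We have
\[
\varinjlim_n \Hom_R(M/\mathfrak{m}^nM,N)\cong \Gamma_\mathfrak{m}(\Hom_R(M,N)).
\]
This holds because an element of $\Hom_R(M,N)$ is killed by some $\mathfrak{m}^n$ exactly when it factors through $M/\mathfrak{m}^nM$; since $M$ is finitely generated, being killed by $\mathfrak{m}^n$ can be checked on the generators. Direct limits are exact, so $\varinjlim_n\Ext^i_R(M/\mathfrak{m}^nM,-)$ is a $\delta$-functor. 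It is effaceable, since it vanishes on injectives for $i>0$. Therefore it is universal and agrees with $R^i(\Gamma_\mathfrak{m}\Hom_R(M,-))$.

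The second step is to check Grothendieck's hypothesis: $F$ sends injectives to $G$-acyclics. If $E$ is injective, then $\Hom_R(M,E)$ is a direct summand of some $\Hom_R(R^k,E)=E^k$, using a finite presentation of $M$. Indeed, $\Hom_R(M,E)$ is flat-cotorsion in general, but for Noetherian $R$ the cleaner argument is via Matlis: $E=\bigoplus E(R/\mathfrak{p})^{(\mu_\mathfrak{p})}$, and since $M$ is finitely presented, $\Hom_R(M,-)$ commutes with direct sums. Then $\Hom_R(M,E(R/\mathfrak{p}))$ is an $R_\mathfrak{p}$-module. If $\mathfrak{p}\ne\mathfrak{m}$, it has vanishing $\Gamma_\mathfrak{m}$-cohomology, since $H^i_\mathfrak{m}$ of an $R_\mathfrak{p}$-module can be computed over $R_\mathfrak{p}$ with $\mathfrak{m}R_\mathfrak{p}=R_\mathfrak{p}$. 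If $\mathfrak{p}=\mathfrak{m}$, the module is $\mathfrak{m}$-torsion, hence $\Gamma_\mathfrak{m}$-acyclic.

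The main obstacle is exactly this acyclicity point, and the Matlis decomposition above handles it. An alternative avoids it entirely: take an injective resolution $I^\bullet$ of $N$, form the double complex $\Gamma_\mathfrak{m}(\Hom_R(M,I^\bullet))$ with a Čech complex, $\check{C}^\bullet(\underline{x})\otimes\Hom_R(M,I^\bullet)$ for $\underline{x}$ generating $\mathfrak{m}$, and compare its two spectral sequences. The first filtration gives $H^p_\mathfrak{m}(\Ext^q_R(M,N))$ at $E_2$. The second collapses because $\Hom_R(M,I^j)$ is $\Gamma_\mathfrak{m}$-acyclic by the same argument. The abutment is then $H^{p+q}(\Gamma_\mathfrak{m}\Hom_R(M,I^\bullet))=H^{p+q}_\mathfrak{m}(M,N)$ by the first step.

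With both steps in place, Grothendieck's composite-functor theorem applied to $G\circ F$ yields the stated spectral sequence.
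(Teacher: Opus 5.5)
Your proof is correct. The paper itself gives no argument for this lemma; it is quoted from \cite[Proposition 2.1]{FJMS}. What you give is a complete, standard proof of it: you identify $H^i_\mathfrak{m}(M,-)$ with $R^i(\Gamma_\mathfrak{m}\Hom_R(M,-))$ via universal $\delta$-functors and then apply Grothendieck's composite-functor theorem. The double complex $\check{C}^\bullet(\underline{x})\otimes\Hom_R(M,I^\bullet)$ is an equivalent way to package the same argument.

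Your second step needs one clean-up: the two sentences before the Matlis argument are false and should be removed.
\begin{itemize}
\item A finite presentation $R^m\to R^k\to M\to 0$ only exhibits $\Hom_R(M,E)$ as the kernel of $E^k\to E^m$, not as a direct summand of $E^k$. A summand of $E^k$ would be injective.
\item $\Hom_R(M,E)$ need not be flat.
\end{itemize}
For example, over $R=k[\![x]\!]$ one has $\Hom_R(k,E(k))\cong k$, where $E(k)$ is the injective hull of $k$, and $k$ is neither injective nor flat.

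The Matlis-decomposition argument that follows is what actually carries the step, and it is sound. It tacitly uses two facts: that $H^i_\mathfrak{m}$ commutes with direct sums, and that $\Hom_R(M,E(R/\mathfrak{m}))$ is $\mathfrak{m}$-torsion because $M$ is finitely generated.

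Also, the \v{C}ech double-complex route does not ``avoid'' the acyclicity issue, as you in fact acknowledge. The collapse of the second spectral sequence uses precisely that each $\Hom_R(M,I^j)$ is $\Gamma_\mathfrak{m}$-acyclic.
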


\begin{lemma}{\rm (\cite[Lemma 2.2]{Y})}\label{tor0}
Let $R$ be a local ring and $M, N$ be finite $R$-modules with $\pd_RM<\infty$ and $N$ maximal Cohen-Macaulay. Then, $\Tor_j^R(M,N)=0$ for all $j\geq1$.
\end{lemma}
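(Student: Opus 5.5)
The plan is to realize $\Tor^R_\bullet(M,N)$ as the homology of a short complex of maximal Cohen–Macaulay modules and then apply the Peskine–Szpiro acyclicity lemma. Let $p=\pd_RM<\infty$ and let
$$0\to F_p\to\cdots\to F_1\to F_0\to 0$$
be a minimal free resolution of $M$. By the Auslander–Buchsbaum formula,
$$p=\depth R-\depth_RM\leq \depth R\leq \dim R.$$
Tensoring with $N$ gives a complex $C_\bullet=F_\bullet\otimes_RN$ of length $p$. Each term of $C_\bullet$ is a finite direct sum of copies of $N$, and $H_j(C_\bullet)=\Tor^R_j(M,N)$. If $N=0$ there is nothing to prove. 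Otherwise every term has depth $\dim R\geq p$, since $N$ is maximal Cohen–Macaulay.

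The acyclicity lemma says the following. Suppose $0\to C_p\to\cdots\to C_0$ is a complex of finite modules with $\depth C_i\geq i$ for all $i$, and each $H_i(C_\bullet)$ for $i\geq1$ is either $0$ or has depth $0$. Then $C_\bullet$ is acyclic in positive degrees. Our depths are far above $i$, so the only remaining input is that each $\Tor^R_j(M,N)$ with $j\geq 1$ has finite length, which certainly gives depth $0$ when nonzero.

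The finite-length statement is the main step, and I would prove the whole lemma by induction on $\dim R$.
\begin{itemize}
\item \emph{Base case.} If $\dim R=0$, then $\depth R=0$, so $p=0$. Thus $M$ is free and all higher Tor vanish.
\item \emph{Inductive step.} For $\dim R>0$, take a prime $\mathfrak p\neq\mathfrak m$. Then $M_\mathfrak p$ has finite projective dimension over $R_\mathfrak p$, and $\dim R_\mathfrak p<\dim R$. Also $N_\mathfrak p$ is either zero or maximal Cohen–Macaulay over $R_\mathfrak p$, because localization preserves the Cohen–Macaulay property and $\dim N=\dim R$. The induction hypothesis therefore gives $\Tor^R_j(M,N)_\mathfrak p=\Tor^{R_\mathfrak p}_j(M_\mathfrak p,N_\mathfrak p)=0$ for $j\geq1$.
\end{itemize}
So each $\Tor^R_j(M,N)$ with $j\geq 1$ is supported only at $\mathfrak m$, hence has finite length. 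The acyclicity lemma then forces $\Tor^R_j(M,N)=0$ for all $j\geq 1$.

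The point needing most care is the localization claim, namely that $N_\mathfrak p$ is maximal Cohen–Macaulay or zero. This is the standard fact that localizations of Cohen–Macaulay modules are Cohen–Macaulay, combined with $\dim N_\mathfrak p=\dim R_\mathfrak p$, which holds for maximal Cohen–Macaulay modules by \cite{BH}.

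If one prefers to avoid the acyclicity lemma, a fallback is induction on $p$ using a syzygy sequence $0\to K\to F\to M\to0$. Higher Tor shift down to $K$, and only $\Tor_1$ needs a separate argument. That argument again reduces to a depth count on $K\otimes N\to F\otimes N$, using the finite-length kernel obtained by localizing as above.
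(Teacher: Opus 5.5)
Your proof is correct. The paper gives no argument of its own here; it simply cites \cite[Lemma 2.2]{Y}. So your write-up supplies a self-contained proof, and it is the standard one. Strong induction on $\dim R$ shows that each $\Tor^R_j(M,N)$ with $j\geq 1$ is supported only at $\mathfrak m$. Then the Peskine--Szpiro acyclicity lemma applies to $F_\bullet\otimes_R N$, whose terms all have depth $\dim R\geq \depth R\geq p$.

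The citation buys brevity. Your route makes explicit that no Cohen--Macaulay hypothesis on $R$ is needed, as the statement requires. (In the paper's only use, Theorem~\ref{prescribedpd}, $R$ is Cohen--Macaulay, $N=\omega_R$, and only $\Tor_1$ matters.)

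The step you flagged does hold over a non-Cohen--Macaulay $R$, but it deserves one line rather than a bare appeal to \cite{BH}. Take $\mathfrak p\in\operatorname{Supp}N$. Then $\dim N/\mathfrak pN=\dim R/\mathfrak p$. Since $N$ is Cohen--Macaulay, $\operatorname{grade}(\mathfrak p,N)=\dim N-\dim N/\mathfrak pN$ (see \cite[\S 2.1]{BH}). Hence
\[
\dim R-\dim R/\mathfrak p=\operatorname{grade}(\mathfrak p,N)\leq \depth N_{\mathfrak p}\leq \dim N_{\mathfrak p}\leq \operatorname{ht}\mathfrak p\leq \dim R-\dim R/\mathfrak p .
\]
All these are therefore equalities, so $\depth N_{\mathfrak p}=\dim N_{\mathfrak p}=\dim R_{\mathfrak p}$. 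Your fallback argument by induction on $p$ is not needed.
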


\begin{lemma}{\rm (\cite[Theorem 2.3]{S})}\label{depthlemma}
Let $(R, \mathfrak{m})$ be a local ring and $M, N$ be finite $R$-modules. Set $t=\depth_RN$. Then, $H^t_\mathfrak{m}(M,N)\neq0$ and $H^j_\mathfrak{m}(M,N)=0$ for all $j<t$.
\end{lemma}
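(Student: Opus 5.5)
The idea is to realize $H^i_\mathfrak{m}(M,-)$ as the right derived functors of a composite functor and use the Grothendieck spectral sequence that comes from writing it the other way round from Lemma \ref{ss2}. We assume $M\neq 0$; otherwise every $H^i_\mathfrak{m}(M,N)$ vanishes and the statement is empty. If $N=0$ then $t=+\infty$ and there is nothing to prove, so we also assume $N\neq 0$, hence $t<\infty$.

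First, we set $\Gamma_\mathfrak{m}(M,X)=\varinjlim \Hom_R(M/\mathfrak{m}^nM,X)$. Since $M$ is finitely generated, this equals $\Hom_R(M,\Gamma_\mathfrak{m}(X))$: a map $M\to X$ factors through some $M/\mathfrak{m}^nM$ exactly when its image is killed by a power of $\mathfrak{m}$. We then check that $H^i_\mathfrak{m}(M,-)$ are the right derived functors of $\Gamma_\mathfrak{m}(M,-)$. This holds because $\varinjlim$ is exact and each $\Ext^i_R(M/\mathfrak{m}^nM,-)$ is the $i$th derived functor of $\Hom_R(M/\mathfrak{m}^nM,-)$. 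Over a Noetherian ring, $\Gamma_\mathfrak{m}$ sends injective modules to injective modules. Hence the Grothendieck spectral sequence for the composite $\Hom_R(M,-)\circ\Gamma_\mathfrak{m}$ exists:
$$E_2^{p,q}=\Ext^p_R(M,H^q_\mathfrak{m}(N))\Rightarrow_p H^{p+q}_\mathfrak{m}(M,N).$$

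Next, we use Grothendieck's vanishing and nonvanishing theorem: $H^q_\mathfrak{m}(N)=0$ for $q<t$ and $H^t_\mathfrak{m}(N)\neq 0$. So $E_2^{p,q}=0$ whenever $q<t$. Every term on a diagonal $p+q=j<t$ has $q\leq j<t$, so it vanishes, and therefore $H^j_\mathfrak{m}(M,N)=0$ for $j<t$. On the diagonal $p+q=t$, the only possibly nonzero term is $E_2^{0,t}$. It is a corner term: differentials leaving it land in rows $q<t$, which are zero, and nothing maps into it because the incoming differentials start in columns $p<0$. Hence $E_2^{0,t}=E_\infty^{0,t}$ and
$$H^t_\mathfrak{m}(M,N)\cong \Hom_R(M,H^t_\mathfrak{m}(N)).$$

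Finally, we show this $\Hom$ is nonzero. The module $H^t_\mathfrak{m}(N)$ is nonzero and $\mathfrak{m}$-torsion, so it has nonzero socle and contains a copy of $k=R/\mathfrak{m}$. Since $M\neq 0$, Nakayama's lemma gives a surjection $M\to k$. Composing $M\to k\hookrightarrow H^t_\mathfrak{m}(N)$ gives a nonzero homomorphism, so $H^t_\mathfrak{m}(M,N)\neq 0$.

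The step needing the most care is the identification of $H^i_\mathfrak{m}(M,-)$ with the derived functors of $\Hom_R(M,\Gamma_\mathfrak{m}(-))$, together with the injective-preservation property of $\Gamma_\mathfrak{m}$ that the composite-functor spectral sequence requires. Both are standard; the remaining steps are formal corner arguments.
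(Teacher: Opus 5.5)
Your proof is correct. The paper does not prove this lemma; it only quotes Suzuki's Theorem 2.3. So your argument is a self-contained justification, not a reconstruction of one in the text.

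The decisive choice is your spectral sequence. You use the companion of the one in Lemma~\ref{ss2}, coming from the factorization $\Gamma_\mathfrak{m}(M,-)=\Hom_R(M,-)\circ\Gamma_\mathfrak{m}$ rather than $\Gamma_\mathfrak{m}\circ\Hom_R(M,-)$. This puts $H^q_\mathfrak{m}(N)$ on the $E_2$ page, and with it Grothendieck's characterization of depth. It also gives you more than nonvanishing: you get the isomorphism $H^t_\mathfrak{m}(M,N)\cong\Hom_R(M,H^t_\mathfrak{m}(N))$.

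The ingredients you flag are standard and correctly used:
\begin{itemize}
\item the identification of $\varinjlim\Hom_R(M/\mathfrak{m}^nM,-)$ with $\Hom_R(M,\Gamma_\mathfrak{m}(-))$;
\item the derived-functor description, via exactness of $\varinjlim$;
\item the fact that $\Gamma_\mathfrak{m}$ preserves injectives.
\end{itemize}
The corner argument is also right.

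You can avoid spectral sequences by inducting on $t$. For $t\geq 1$, choose an $N$-regular element $x\in\mathfrak{m}$ and apply $H^\bullet_\mathfrak{m}(M,-)$ to $0\to N\xrightarrow{x}N\to N/xN\to 0$. Every $H^j_\mathfrak{m}(M,N)$ is $\mathfrak{m}$-torsion, so injectivity of multiplication by $x$ forces $H^j_\mathfrak{m}(M,N)=0$ for $j<t$. Nonvanishing comes from the injection $H^{t-1}_\mathfrak{m}(M,N/xN)\hookrightarrow H^t_\mathfrak{m}(M,N)$. The base case $t=0$ is exactly your socle argument.

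One correction. For $M=0\neq N$ the statement is not empty but false: every $H^i_\mathfrak{m}(0,N)$ vanishes while $t<\infty$. So $M\neq 0$ is a hypothesis the lemma as printed omits, and you are right to impose it.
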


In the next lemma, and wherever it appears throughout the paper, the notation $H^{\vee}$ will stand for the Matlis dual of a module $H$ over a given local ring $(R, \mathfrak{m})$. In addition, the $\mathfrak{m}$-adic completion of a finite $R$-module $M$ will be denoted $\widehat{M}$\, ($\cong M\otimes_R\widehat{R}$).

\begin{lemma}{\rm (\cite[Theorem 2.1(a)]{HZ})}\label{duality}
Let $(R, \mathfrak{m})$ be a Cohen-Macaulay local ring of dimension $d$, and let $M, N$ be a pair of finite $R$-modules. If $\pd_RM<\infty$ then, for each $j\geq0$, there is an isomorphism $H^j_\mathfrak{m}(M,N)^\vee\cong\Ext^{d-j}_{\widehat{R}}(\widehat{N}, \widehat{M}\otimes_{\widehat{R}}\omega_{\widehat{R}})$.
\end{lemma}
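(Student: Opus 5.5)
We prove this by reducing to the complete case, rewriting the generalized local cohomology as the local cohomology of a bounded complex built from a finite free resolution of $M$, and then applying Grothendieck local duality in its derived form.

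\emph{Reduction to the complete case.} Since $\mathfrak{m}$-torsion modules are unchanged by completion, and $\Ext$ of finite modules commutes with the flat base change $R\to\widehat{R}$, we have $H^j_\mathfrak{m}(M,N)\cong H^j_{\widehat{\mathfrak{m}}}(\widehat{M},\widehat{N})$. The Matlis dual over $R$ of an Artinian $\mathfrak{m}$-torsion module agrees with its Matlis dual over $\widehat{R}$. Moreover $\widehat{R}$ is Cohen-Macaulay of dimension $d$ and $\pd_{\widehat{R}}\widehat{M}=\pd_RM<\infty$. So we may assume $R$ is complete; in particular $R$ has a canonical module $\omega=\omega_R$.

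\emph{Rewriting $H^j_\mathfrak{m}(M,N)$.} Let $F_\bullet\to M$ be a finite free resolution of finite length, and let $I^\bullet$ be an injective resolution of $N$. Since $M/\mathfrak{m}^nM$ and $\Gamma_\mathfrak{m}$ interact well with finite free modules, the direct limit defining $H^j_\mathfrak{m}(M,N)$ is computed as
$$H^j_\mathfrak{m}(M,N)\cong H^j\big(\Gamma_\mathfrak{m}(\Hom_R(F_\bullet,I^\bullet))\big)=H^j\big(\Hom_R(F_\bullet,\Gamma_\mathfrak{m}(I^\bullet))\big),$$
using that $\Hom_R(F_i,-)$ commutes with $\Gamma_\mathfrak{m}$ and with direct limits because each $F_i$ is finite free. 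Write $F^*=\Hom_R(F_\bullet,R)$ and $X=F^*\otimes_R N$, a bounded complex with finite homology. Since $\Hom_R(F_\bullet,-)\cong F^*\otimes_R-$, we get $H^j_\mathfrak{m}(M,N)\cong H^j(\mathbf{R}\Gamma_\mathfrak{m}(X))$.

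\emph{Local duality.} Grothendieck local duality for bounded complexes with finite homology over the complete Cohen-Macaulay ring $R$ gives
$$H^j(\mathbf{R}\Gamma_\mathfrak{m}X)^\vee\cong H^{d-j}(\mathbf{R}\Hom_R(X,\omega)).$$
By adjunction, and since $F^*$ is a bounded complex of finite free modules,
$$\mathbf{R}\Hom_R(F^*\otimes_RN,\omega)\cong\mathbf{R}\Hom_R(N,\Hom_R(F^*,\omega))\cong\mathbf{R}\Hom_R(N,F_\bullet\otimes_R\omega).$$
The homology of $F_\bullet\otimes_R\omega$ is $\Tor^R_\ast(M,\omega)$. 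Since $\omega$ is maximal Cohen-Macaulay and $\pd_RM<\infty$, Lemma \ref{tor0} gives $\Tor^R_i(M,\omega)=0$ for $i\ge1$, so $F_\bullet\otimes_R\omega\simeq M\otimes_R\omega$. Hence $H^j_\mathfrak{m}(M,N)^\vee\cong\Ext^{d-j}_R(N,M\otimes_R\omega)$, as claimed.

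\emph{Main obstacle.} The delicate step is local duality for complexes rather than modules. If needed, I would obtain it from the module version by a hypercohomology spectral sequence comparison over the finite free complex $F^*$, or via the dualizing complex $\omega[d]$ of $R$. One also has to check that the indexing shift comes out as $d-j$.
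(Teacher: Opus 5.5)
The paper gives no argument of its own for this lemma; it is quoted from \cite[Theorem 2.1(a)]{HZ}. Your proposal is therefore a self-contained derivation rather than a reconstruction, and it is correct.

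\textbf{Your route.} You proceed in four steps.
\begin{itemize}
\item You identify $H^j_\mathfrak{m}(M,N)$ with $H^j(\mathbf{R}\Gamma_\mathfrak{m}(F^*\otimes_RN))$.
\item You apply Grothendieck local duality for bounded complexes with finite homology over the complete ring, with normalized dualizing complex $\omega[d]$ (Hartshorne, \emph{Residues and Duality}, Ch.~V, \S6).
\item You use adjunction to reach $\mathbf{R}\Hom_R(N,F_\bullet\otimes_R\omega)$.
\item You collapse $F_\bullet\otimes_R\omega\simeq M\otimes_R\omega$ by Lemma \ref{tor0}.
\end{itemize}
The indexing is right: the case $M=R$ recovers classical local duality $H^j_\mathfrak{m}(N)^\vee\cong\Ext^{d-j}_R(N,\omega)$.

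\textbf{What this route buys.} It shows exactly where $\pd_RM<\infty$ is used.
\begin{itemize}
\item First, it makes $F^*$ a bounded complex of finite free modules. Hence $X$ has bounded finite homology, and $\mathbf{R}\Hom_R(F^*,\omega)=\Hom_R(F^*,\omega)\cong F_\bullet\otimes_R\omega$.
\item Second, it gives the vanishing $\Tor^R_i(M,\omega)=0$ for $i\geq 1$. This is the same input the paper uses in Theorem \ref{prescribedpd} to obtain the short exact sequence involving $M\otimes_R\Omega\omega_R$.
\end{itemize}

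\textbf{What it costs.} You need local duality for complexes, not just for modules. Your suggested fallback (a hypercohomology comparison) needs a natural comparison map at the level of complexes, not merely termwise isomorphisms. So citing the derived statement directly is cleaner.

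\textbf{One step is mis-justified, though the claim is true.} The isomorphism $\varinjlim_n\Ext^j_R(M/\mathfrak{m}^nM,N)\cong H^j(\Hom_R(F_\bullet,\Gamma_\mathfrak{m}(I^\bullet)))$ does not follow from $\Hom_R(F_i,-)$ commuting with $\Gamma_\mathfrak{m}$ and with direct limits. Indeed, $F_\bullet/\mathfrak{m}^nF_\bullet$ does not resolve $M/\mathfrak{m}^nM$. Argue instead as follows.
\begin{itemize}
\item For finite $M$ one has $\varinjlim_n\Hom_R(M/\mathfrak{m}^nM,I)=\Gamma_\mathfrak{m}(\Hom_R(M,I))=\Hom_R(M,\Gamma_\mathfrak{m}(I))$. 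Exactness of filtered colimits then gives $H^j_\mathfrak{m}(M,N)\cong H^j(\Hom_R(M,\Gamma_\mathfrak{m}(I^\bullet)))$.
\item Since $R$ is Noetherian, each $\Gamma_\mathfrak{m}(I^q)$ is injective. Hence $\Hom_R(F_\bullet,\Gamma_\mathfrak{m}(I^q))$ is exact except in degree $0$, where its cohomology is $\Hom_R(M,\Gamma_\mathfrak{m}(I^q))$.
\item The double complex (bounded in the $F$-direction) therefore collapses, giving $H^j_\mathfrak{m}(M,N)\cong H^j(\Hom_R(F_\bullet,\Gamma_\mathfrak{m}(I^\bullet)))\cong H^j(\Gamma_\mathfrak{m}(F^*\otimes_RI^\bullet))$.
\item $F^*\otimes_RI^\bullet$ is a bounded-below complex of injectives quasi-isomorphic to $X$, so this is $H^j(\mathbf{R}\Gamma_\mathfrak{m}X)$.
\end{itemize}
With this repair, the rest of your argument goes through unchanged.
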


We now introduce a couple of convenient conditions.

\begin{definition}\rm (The conditions $({\bf V}_{n, s})$ and $({\bf D}_{s})$) Let $R$ be a local ring and $M, N$ finite $R$-modules. 
Suppose $R$ has a canonical module $\omega_R$, and let us denote the module of (first-order) minimal syzygies of $\omega_R$ by  $\Omega^1\omega_R$ or simply by $\Omega\omega_R$. For positive integers $n\leq s\leq d={\rm dim}\,R$, we say that the pair $M, N$ satisfies the $({\bf V}_{n, s})$ condition if $$\Ext^j_R(M,N)=\Ext^{j+1}_R(M,N\otimes_R\Omega\omega_R)=0\quad \mbox{for \,all} \quad j=n, \ldots, s.$$ 


Next, even in case the local ring $R$ admits no canonical module, we can consider the following depth condition. A pair of finite $R$-modules $M, N$ has the $({\bf D}_{s})$ condition if $$\depth_R\Ext^q_R(M, N)\geq d-s-q\quad \mbox{for \,all} \quad q=0, \ldots, d-s.$$ 

In case the pair $M, M$ satisfies any of these properties, we will
refer to such condition as being a property of $M$ itself.

    
\end{definition}

Here is our main result in this paper.

\begin{theorem}\label{prescribedpd}
Let $R$ be a Cohen-Macaulay local ring of dimension $d$ with canonical module $\omega_R$. If $M$ is a finite $R$-module with $\pd_RM<\infty$ such that the $({\bf V}_{n, s})$ and $({\bf D}_{s})$ conditions hold for positive integers $n, s$ with $n\leq s\leq d$, then $\pd_RM<n$.
\end{theorem}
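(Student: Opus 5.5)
The plan is to reduce everything to Lemma \ref{pdfinite} applied with $N=M$. We may assume $M\neq 0$, since otherwise $\pd_RM=-\infty<n$. Lemma \ref{pdfinite} then gives $\pd_RM=e_R(M,M)$, so it suffices to show that $\Ext^j_R(M,M)=0$ for every $j\geq n$. The condition $({\bf V}_{n,s})$ already supplies this for $n\leq j\leq s$. Hence the whole problem becomes proving the a priori bound $\pd_RM\leq s$. Once that bound is known, $\Ext^j_R(M,M)=0$ also for $j>s$, so $e_R(M,M)\leq n-1$ and therefore $\pd_RM<n$.

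To get $\pd_RM\leq s$, I would use the depth condition $({\bf D}_s)$ together with the spectral sequence of Lemma \ref{ss2},
$$E_2^{p,q}=H^p_\mathfrak{m}(\Ext^q_R(M,M))\Rightarrow_p H^{p+q}_\mathfrak{m}(M,M).$$
Grothendieck's vanishing below the depth says $H^p_\mathfrak{m}(\Ext^q_R(M,M))=0$ whenever $p<\depth_R\Ext^q_R(M,M)$. Condition $({\bf D}_s)$ therefore kills $E_2^{p,q}$ for all $q\leq d-s$ and $p<d-s-q$, that is, whenever $p+q<d-s$. Every $E_2$-term on the anti-diagonals $p+q=j$ with $j<d-s$ has $p,q\geq 0$ and $q\leq j<d-s$, so each such term is zero. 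Consequently $H^j_\mathfrak{m}(M,M)=0$ for all $j<d-s$.

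Next I would apply Lemma \ref{depthlemma} with $N=M$. With $t=\depth_RM$, that lemma says $H^t_\mathfrak{m}(M,M)\neq 0$. The vanishing just obtained then forces $\depth_RM\geq d-s$. Since $R$ is Cohen--Macaulay, $\depth R=d$, and the Auslander--Buchsbaum formula (equivalently, the second form of Lemma \ref{pdfinite}) gives $\pd_RM=d-\depth_RM\leq s$. This is exactly the bound needed, and the first paragraph completes the proof.

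The point I expect to require the most care is the spectral-sequence step: checking the index bookkeeping, and confirming that vanishing of all $E_2$-terms on an anti-diagonal forces vanishing of the abutment. The latter holds because the abutment is filtered with graded pieces subquotients of those $E_2$-terms, and the spectral sequence converges since it is first-quadrant. This route does not seem to need the second half of $({\bf V}_{n,s})$, the vanishing of $\Ext^{j+1}_R(M,M\otimes_R\Omega\omega_R)$. If the argument above turned out to have a gap at this point, my fallback would be to bring that hypothesis in through the duality of Lemma \ref{duality}: pass to the completion and use the sequence $0\to\Omega\omega_R\to R^{r}\to\omega_R\to 0$ tensored with $M$, which is exact by Lemma \ref{tor0}, to convert Ext vanishing into vanishing of the generalized local cohomology $H^{d-j}_\mathfrak{m}(M,M)$. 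I expect, however, that the direct argument suffices.
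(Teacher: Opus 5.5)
Your proof is correct, and it follows a genuinely shorter route than the paper's.

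\textbf{Shared step.} Both arguments use the same spectral-sequence step: $({\bf D}_s)$ gives $H^j_\mathfrak{m}(M,M)=0$ for $j<d-s$. Your bookkeeping there is right.

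\textbf{The paper's route.} The paper then uses Lemma \ref{tor0} and the $\Omega\omega_R$-half of $({\bf V}_{n,s})$ to obtain $\Ext^j_R(M,M\otimes_R\omega_R)=0$ for $n\le j\le s$. It passes to the completion and applies generalized local duality (Lemma \ref{duality}) to get $H^j_\mathfrak{m}(M,M)=0$ for $d-s\le j\le d-n$. Lemma \ref{depthlemma} then gives $\depth_RM>d-n$.

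\textbf{Your route.} You read the spectral-sequence vanishing as $\depth_RM\ge d-s$, that is, $\pd_RM\le s$. You then set the nonvanishing of $\Ext^{\pd_RM}_R(M,M)$ (Lemma \ref{pdfinite}) against the vanishing of $\Ext^j_R(M,M)$ for $n\le j\le s$. This is exactly the argument the paper uses only for the degenerate case $s\ge d$.

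\textbf{What your route buys.} It shows that the following are all superfluous for this statement:
\begin{itemize}
\item the vanishing of $\Ext^{j+1}_R(M,M\otimes_R\Omega\omega_R)$;
\item the canonical module;
\item passage to the completion;
\item Lemmas \ref{tor0} and \ref{duality}.
\end{itemize}
Even the Cohen--Macaulay hypothesis can be dropped. Over any local ring, Auslander--Buchsbaum gives $\pd_RM=\depth R-\depth_RM\le d-\depth_RM\le s$. In particular, Corollary \ref{janswer} holds over an arbitrary local ring using only $\Ext^n_R(M,M)=0$ and $({\bf D}_n)$.

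The paper's detour through duality gains nothing extra for this statement. Your proposed fallback is therefore unnecessary.
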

\begin{proof}
By Lemma \ref{pdfinite} we have $\pd_RM=e_R(M,M)$, so $e_R(M,M)\leq d$. If we first consider the case $d\leq s$, then we must have $e_R(M,M)<n$. Therefore, we may suppose $n\leq s<d$.

Since $\pd_RM<\infty$, Lemma \ref{tor0} forces $\Tor^R_1(M,\omega_R)=0$ and so there is a short exact sequence $$\xymatrix@=1em{0\ar[r] & M\otimes_R\Omega\omega_R\ar[r] & M\otimes_R F\ar[r] & M\otimes_R\omega_R\ar[r] & 0}$$ for some finite free $R$-module $F$. Hence, for each $j\geq0$, we get an exact sequence $$\xymatrix@=1em{\Ext^j_R(M,M\otimes_RF)\ar[r] & \Ext^j_R(M,M\otimes_R\omega_R)\ar[r] & \Ext^{j+1}_R(M,M\otimes_R\Omega\omega_R).}$$ By the $({\bf V}_{n, s})$ hypothesis, it follows that $\Ext^j_R(M,M\otimes_R\omega_R)=0$ for all $j=n, \ldots, s.$ Now, it should be noticed that taking $\mathfrak{m}$-adic completion (where $\mathfrak{m}$ is the maximal ideal of $R$) does not affect the conditions present in the statement, i.e., we may suppose that $R$ is complete. Thus, as $\pd_RM<\infty$, there are isomorphisms $$H^j_\mathfrak{m}(M,M)\cong\Ext^{d-j}_R(M,M\otimes_R\omega_R)^\vee=0 \quad \mbox{for \,all} \quad j=d-s, \ldots, d-n$$ by generalized local duality (see Lemma \ref{duality}). On the other hand, considering the spectral sequence given in Lemma \ref{ss2}, $$E_2^{p,q}=H^p_\mathfrak{m}(\Ext^q_R(M,M))\Rightarrow_p H^{p+q}_\mathfrak{m}(M,M),$$ the $({\bf D}_{s})$ hypothesis implies that $E_2^{p,q}=0$ for all $p<d-s-q$, that is, $p+q<d-s$. By convergence we conclude that $H^j_\mathfrak{m}(M,M)=0$ for all $j<d-s$. Therefore, $H^j_\mathfrak{m}(M,M)=0$ for all $j\leq d-n$ and, by Lemma \ref{depthlemma}, $\depth_RM>d-n$, so that $\pd_RM=d-\depth_RM<n$. \end{proof}

\begin{remark}\rm
We point out that, in the proof of Theorem \ref{prescribedpd}, the vanishing $\Tor_1^R(M,\omega_R)=0$ is assured if we ask $\gdim_RM<\infty$ instead of $\pd_RM<\infty$, see \cite[(3.4.6)]{Cr00}. However, in this situation, we are unable to apply local duality \ref{duality}. A similar obstacle arises in Theorem \ref{prescribedpdhom}.
\end{remark}

In the sequel, we will derive some immediate consequences of Theorem \ref{prescribedpd} by taking particular values of $n$ or $s$. First, we consider the case $n=1$, i.e., a characterization of freeness.

\begin{corollary}
Let $R$ be a Cohen-Macaulay local ring of dimension $d$ with canonical module $\omega_R$. If $M$ is a finite $R$-module with $\pd_RM<\infty$ such that the $({\bf V}_{1, s})$ and $({\bf D}_{s})$ conditions hold for some positive integer $s\leq d$, then $M$ is free.
\end{corollary}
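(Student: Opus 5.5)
This is the specialization $n=1$ of Theorem \ref{prescribedpd}, so the plan is to apply that theorem directly and then read off freeness from the Auslander--Buchsbaum formula.

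\textbf{Step 1: apply Theorem \ref{prescribedpd}.} The hypotheses match exactly. We have:
\begin{itemize}
\item $R$ is Cohen-Macaulay of dimension $d$ with canonical module $\omega_R$;
\item $\pd_RM<\infty$;
\item the pair $M,M$ satisfies $({\bf V}_{1,s})$ and $({\bf D}_{s})$, with the positive integers $n=1$ and $s$ satisfying $1\leq s\leq d$, as the theorem requires.
\end{itemize}
The theorem therefore gives $\pd_RM<1$, that is, $\pd_RM=0$.

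\textbf{Step 2: conclude freeness.} A finite module of projective dimension zero over a local ring is projective. Finite projective modules over a local ring are free. Hence $M$ is free.

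The edge case $M=0$ is harmless. We may regard it as free of rank zero, or simply assume $M\neq 0$ throughout, as Lemma \ref{pdfinite} implicitly does in the proof of Theorem \ref{prescribedpd}.

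\textbf{Expected obstacle.} There is no real obstacle: all the work is done in Theorem \ref{prescribedpd}. The only care needed is to check that $n=1$ satisfies the standing constraints $1\leq n\leq s\leq d$ of that theorem, which holds because $s$ is assumed positive and at most $d$.
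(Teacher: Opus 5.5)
Your proposal is correct and matches the paper, which presents this corollary as the immediate $n=1$ specialization of Theorem \ref{prescribedpd}: that theorem gives $\pd_RM<1$, and a finite module of projective dimension zero over a local ring is free. Your remark on the trivial case $M=0$ is a harmless extra detail.
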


Now we focus on Question \ref{jquestion}, which is the main problem we want to tackle in this paper.
Our Theorem \ref{prescribedpd} detects an additional (depth) condition under which Question \ref{jquestion} admits a positive answer. On the other hand, we do not require the ring to be a complete intersection. To be more precise, we consider the case where $R$ is Gorenstein and apply Theorem \ref{prescribedpd} with $s=n$ in order to record the following result, which gives a partial answer to Jorgensen's question (for the broader class of Gorenstein local rings). Note that the Gorenstein assumption implies the $({\bf V}_{n,n})$ condition.

\begin{corollary}\label{janswer}
Let $R$ be a Gorenstein local ring of dimension $d$. If $M$ is a finite $R$-module with $\pd_RM<\infty$ and $\Ext^n_R(M,M)=0$, such that the $({\bf D}_n)$ condition holds for some positive integer $n\leq d$, then $\pd_RM<n$.
\end{corollary}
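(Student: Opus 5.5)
The plan is to deduce the corollary directly from Theorem \ref{prescribedpd} with $s=n$. A Gorenstein local ring is Cohen-Macaulay, and it has a canonical module, namely $\omega_R\cong R$. Since $n\leq d$ and $n\leq s=n$, the integers $n,s$ meet the requirements of that theorem. The $({\bf D}_s)$ hypothesis becomes exactly $({\bf D}_n)$, which we are given. Once $({\bf V}_{n,n})$ is verified, the theorem gives $\pd_RM<n$.

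To check $({\bf V}_{n,n})$, we need two vanishings:
\[
\Ext^n_R(M,M)=0 \quad\text{and}\quad \Ext^{n+1}_R(M,M\otimes_R\Omega\omega_R)=0.
\]
The first is a hypothesis. For the second, take $\omega_R=R$. Its minimal free presentation is the identity $R\to R$, so the first syzygy is $\Omega\omega_R=0$. Then $M\otimes_R\Omega\omega_R=0$, and every Ext into it vanishes.

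There is one formal point about identifying $\omega_R$. The canonical module is determined only up to isomorphism, and the minimal syzygy only up to isomorphism as well. So $\Omega\omega_R$ is isomorphic to $\Omega R=0$ for any choice, and the vanishing condition in $({\bf V}_{n,n})$ is invariant under such isomorphisms.

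I do not expect a real obstacle. The only care needed is to confirm that Theorem \ref{prescribedpd} imposes no further hypotheses, and it does not: all of $R$ Cohen-Macaulay with $\omega_R$, $\pd_RM<\infty$, and $1\leq n\leq s\leq d$ hold here.
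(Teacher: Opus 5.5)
Your proposal is correct and matches the paper's argument: it applies Theorem \ref{prescribedpd} with $s=n$. Since $\omega_R\cong R$ for Gorenstein $R$, you get $\Omega\omega_R=0$, so $({\bf V}_{n,n})$ reduces to the hypothesis $\Ext^n_R(M,M)=0$, and $({\bf D}_s)$ becomes the given $({\bf D}_n)$.
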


Over complete intersections, we obtain a particularly interesting result. 

\begin{corollary}
Let $R$ be a complete intersection local ring of dimension $d$. If $M$ is a finite $R$-module with $\Ext^n_R(M,M)=0$, such that the $({\bf D}_n)$ condition holds for some positive even integer $n\leq d$, then $\pd_RM<n$.
\end{corollary}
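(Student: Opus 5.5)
The plan is to reduce this to Corollary \ref{janswer}; the only thing to supply is the hypothesis $\pd_RM<\infty$, which the statement omits. A complete intersection is Gorenstein, so once finiteness of projective dimension is available, Corollary \ref{janswer} with the given $n$ and the $({\bf D}_n)$ condition yields $\pd_RM<n$ at once.

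To obtain $\pd_RM<\infty$ from the single vanishing $\Ext^n_R(M,M)=0$ with $n$ even, I would use the theory of cohomological support varieties over complete intersections (Avramov--Buchweitz). First pass to the completion, which changes neither the Ext modules, the depths of Ext modules, nor finiteness of projective dimension; then $R=Q/(\boldsymbol f)$ with $Q$ regular and $\boldsymbol f$ a regular sequence of length $c$. The total Ext module $\Ext^*_R(M,M)$ is a finitely generated graded module over the ring of cohomology operators $S=k[\chi_1,\dots,\chi_c]$ (after tensoring with the residue field), where the $\chi_i$ have degree $2$. I would then invoke the result, due to Avramov--Buchweitz and also appearing in Jorgensen's paper, that over a complete intersection the vanishing of $\Ext^i_R(M,M)$ for a single even $i>0$ forces the support variety $V_R(M)$ to be trivial, and hence $\pd_RM<\infty$. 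The key ingredient is that $\Ext^{\mathrm{even}}_R(M,M)$ contains the image of $S$ acting on the identity of $M$, i.e. the Eisenbud operators evaluated at $\mathrm{id}_M$. If $V_R(M)\neq\{0\}$, this subring is nonzero in all large even degrees, and a degree-shifting argument propagates it back to every positive even degree. That would contradict vanishing in degree $n$.

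The main obstacle is precisely this step, which is not among the lemmas stated earlier in the paper. I would cite it directly as: if $R$ is a complete intersection and $\Ext^{2i}_R(M,M)=0$ for some $i\geq1$, then $\pd_RM<\infty$ (Avramov--Buchweitz, \emph{Support varieties and cohomology over complete intersections}). With that in hand, the remaining checks are routine: $1\le n\le d$, so the hypotheses of Corollary \ref{janswer} hold verbatim, and the conclusion $\pd_RM<n$ follows.
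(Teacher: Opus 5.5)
Your proposal is correct and matches the paper's proof. The paper also obtains $\pd_RM<\infty$ from \cite[Theorem 4.2]{AB}, the Avramov--Buchweitz theorem on a single even self-extension over a complete intersection, and then applies Corollary \ref{janswer}, since a complete intersection is Gorenstein; the passage to the completion and the sketch of the support-variety mechanism are unnecessary once you cite that theorem.
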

\begin{proof} Since $R$ is a complete intersection and $n$ is even, it follows from \cite[Theorem 4.2]{AB} that $\pd_RM<\infty$. Now we apply Corollary \ref{janswer}.
\end{proof}

\medskip

We close the section illustrating the $({\bf D}_n)$ condition.

\begin{example}\label{exs}\rm (a) If as above $d$ denotes the dimension of the local ring $R$ then the case $n=d$ is trivial, i.e., the  $({\bf D}_d)$ condition is immediately seen to be satisfied by any pair of modules. On the other extreme, $({\bf D}_1)$ (i.e., the case $s=n=1$) is the strongest condition.

\medskip

(b) Let $k$ be any field, $m\geq 3$ and $R=k[\![x, y, z_3, \ldots, z_m]\!]/(xy)$. Then $d=m-1$. Clearly,  $0:_RyR=xR$. Now let $M=R/xR$ (notice, for completeness, that ${\rm pd}_RM=\infty$). Then $xM=0$ and, for all $i\geq 1$, we have ${\rm Ext}^{2i-1}_R(M, M)=0:_{M}yR=0$, and ${\rm Ext}^{2i}_R(M, M)=R/(x, y)R$.
Hence, ${\rm depth}_R{\rm Ext}_R^q(M, M)$ is $+\infty$ if $q$ is odd, and $m-2$ if $q$ is even. In addition, ${\rm Hom}_R(M, M)\cong M$, which has depth $m-1$. It follows that $M$ satisfies $({\bf D}_n)$ for all $n\leq m-1$.

\medskip

(c) In the previous example, $M$ was obtained from $R$ by modding out a zero-divisor. Here, let $(R, \mathfrak{m})$ be a Cohen-Macaulay local ring and pick the ideal $J$ generated by  an $R$-sequence $\{x_1, \ldots, x_{d-n}\}\subset {\mathfrak m}$, where $n$ is any integer satisfying $n< d\leq 2n$. Set $M=R/J$. We can write $$\Ext^q_R(M, M)\cong M^{\binom{d-n}{q}}\quad \mbox{for\, all} \quad q\geq 0,$$ which follows easily from the fact that the Koszul complex resolves regular sequences and the very definition of Ext. Therefore, ${\rm depth}_R\Ext^q_R(M, M)={\rm depth}_RM=d-(d-n)=n$, and hence,
for all $q\geq 0$, we have
$2n\geq d\geq d-q$, i.e., $n\geq d-n-q$. This shows that $M$ satisfies $({\bf D}_n)$.

\smallskip

Note this example also illustrates Corollary \ref{janswer} if in addition $d<2n$. Indeed, in this case we have ${\rm pd}_RM< \infty$ and ${\rm Ext}_R^n(M, M)=0$, because ${\rm pd}_RM=d-n<n$.

\medskip

(d) Let $R={\mathbb C}[\![x, y, z, u, v]\!]/(xy-uv)$. Then $R$ is a  hypersurface normal local domain with $d=4$. Consider the $R$-module  $$M={\rm coker}\,A, \quad A  = \left(\begin{array}{cc}
x  & u\\
v & y
\end{array}\right),$$ which has a (periodic) infinite minimal free resolution
$$\xymatrix@=1em{\cdots \ar[r]^{A} & R^2\ar[r]^B & R^2\ar[r]^A & R^2\ar[r] & M\ar[r]  & 0}, \quad B = ~\left(\begin{array}{cc}
y  & -u\\
-v & x
\end{array}\right).$$
First, by \cite[Remark 2.12]{Ara}, we have ${\rm Ext}^1_R(M, M)={\rm Ext}^3_R(M, M)=0$. Now, since $M\cong \Omega^2M$ (the second syzygy module of $M$), there is a short exact sequence 
$\xymatrix@=1em{0\ar[r] & M\ar[r] & R^2\ar[r] & \Omega M\ar[r] & 0}$, which induces a long exact sequence of local cohomology modules
$$\xymatrix@=1em{\cdots \ar[r] & H^i_\mathfrak{m}(M)\ar[r] & H^i_\mathfrak{m}(R^2)\ar[r] & H^i_\mathfrak{m}(\Omega M)\ar[r] & H^{i+1}_\mathfrak{m}(M)\ar[r]  & \cdots}.$$
By doing the same for the natural short exact sequence $\xymatrix@=1em{0\ar[r] & \Omega M\ar[r] & R^2\ar[r] & M\ar[r] & 0}$ and comparing the two long exact sequences, we conclude that both $M$ and $\Omega M$ are maximal Cohen-Macaulay $R$-modules.


It follows again by $M\cong \Omega^2M$ that $\Ext^2_R(M,M)\cong \Hom_R(M,M)$. Now the spectral sequence in Lemma \ref{ss2} has the following shape in its corner $$\xymatrix@=0.5em{
0\ar@{--}[rrrddd] & 0 & 0 & 0 & 0
\\
E_2^{0,2}\ar@{--}[rrdd] & E_2^{1,2} & E_2^{2,2} & E_2^{3,2} & E_2^{4,2}\\
0 & 0 & 0 & 0 & 0
\\
E_2^{0,0} & E_2^{1,0} & E_2^{2,0} & E_2^{3,0} & E_2^{4,0}}$$
with $E_2^{i,2}\cong E_2^{i,0}$ for all $i\geq 0$ and the dotted lines correspond to the convergence of this spectral sequence. Since $\depth_R\Hom_R(M,M)\geq\min\{2,\,\depth_RM\}=2$, it follows that $E_2^{0,2}=E_2^{1,2}=0$ and, by convergence (along with Lemma \ref{depthlemma}), we have $$H^i_\mathfrak{m}(\Hom_R(M,M))=E_2^{i,0}\cong H^i_\mathfrak{m}(M,M)=0 \quad \mbox{for} \quad i\leq3.$$ Therefore $\Hom_R(M,M)$ is maximal Cohen-Macaulay. This shows that $M$ satisfies $({\bf D}_1)$.

\end{example}

\section{Finiteness of other homological dimensions}\label{otherhd}

So far in this note we have dealt especially with modules of finite projective dimension. In the present section, we add further homological dimensions into our investigation and consider, more precisely, the interplay between the vanishing of Ext modules and the finiteness of the injective dimension and the Gorenstein injective dimension of a finite module. Applications to prescribed bound on projective dimension  will be given. We maintain the previous notations.

The auxiliary results below will be useful to the main theorem of this section. The injective dimension of a module $N$ over a ring $R$ is denoted $\id_RN$.

\begin{lemma}\label{cv} 
Let $R$ be a local ring. If $M$ is a maximal Cohen-Macaulay $R$-module and $N$ is a finite $R$-module with $\id_RN<\infty$, then $e_R(M,N)=0$.
\end{lemma}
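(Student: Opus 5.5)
The plan is to prove the more precise statement (Ischebeck's formula)
$$\Ext^i_R(M,N)=0 \quad \mbox{for all} \quad i>\depth R-\depth_RM,$$
valid for any finite $R$-module $M$ and any finite $R$-module $N$ with $\id_RN<\infty$. We then specialize to $M$ maximal Cohen-Macaulay.

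\emph{Step 1.} For the specialization we need $\depth_RM=\depth R$. We may assume $N\neq0$, since otherwise there is nothing to prove. By the Bass conjecture (Peskine--Szpiro, Roberts), the existence of a nonzero finite module of finite injective dimension forces $R$ to be Cohen-Macaulay. Hence $\depth R=\dim R=\depth_RM$, and the displayed vanishing gives $\Ext^i_R(M,N)=0$ for all $i\geq1$. That is precisely $e_R(M,N)=0$, reading $e_R(M,N)$ as $0$ when only $\Hom$ can be nonzero. If $\Hom_R(M,N)$ is also zero, the supremum is over the empty set; we adopt the convention that this is still interpreted as $0$.

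\emph{Step 2.} We prove the displayed vanishing by induction on $\depth_RM$. We also use Bass' equality $\id_RN=\depth R$, valid for finite $N\neq0$ of finite injective dimension.
\begin{itemize}
\item Base case $\depth_RM=0$: the claim is $\Ext^i_R(M,N)=0$ for $i>\depth R=\id_RN$, which holds by the definition of injective dimension.
\item Inductive step $\depth_RM>0$: choose $x\in\mathfrak m$ regular on $M$. Then $\depth_R M/xM=\depth_RM-1$. The sequence $0\to M\xrightarrow{x}M\to M/xM\to0$ yields the exact sequence
$$\Ext^i_R(M,N)\xrightarrow{x}\Ext^i_R(M,N)\to\Ext^{i+1}_R(M/xM,N).$$
If $i>\depth R-\depth_RM$, then $i+1>\depth R-\depth_R(M/xM)$. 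By induction the right-hand term vanishes, so multiplication by $x$ is surjective on the finite module $\Ext^i_R(M,N)$. Nakayama's lemma then gives $\Ext^i_R(M,N)=0$.
\end{itemize}

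\emph{Main obstacle.} The only nontrivial inputs are Bass' formula $\id_RN=\depth R$ and the fact that $R$ is Cohen-Macaulay. Both are standard results that I would cite rather than reprove; the latter is exactly what makes "maximal Cohen-Macaulay" mean $\depth_RM=\depth R$. Once these are in place, the induction above is routine.
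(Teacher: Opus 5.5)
Your skeleton is the paper's (Ischebeck's formula plus Bass' conjecture), and your Step 2 correctly proves the vanishing half of Ischebeck's formula. The difference is that the paper cites Ischebeck's theorem in full, $\sup\{i\mid \Ext^i_R(M,N)\neq 0\}=\depth R-\depth_RM$ for nonzero $M,N$. That equality also contains the \emph{nonvanishing} statement $\Ext^{\depth R-\depth_RM}_R(M,N)\neq 0$. Since you only prove vanishing, what you have shown is $\Ext^i_R(M,N)=0$ for all $i\geq 1$. For nonzero $M,N$, the equality $e_R(M,N)=0$ also asserts $\Hom_R(M,N)\neq 0$. Your convention that an empty supremum counts as $0$ removes this case by fiat rather than excluding it. This is harmless for the paper's later use, since only $\Ext^1_R(\omega_R,N)=0$ is needed in the proof of Theorem \ref{prescribedpdhom}. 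As a proof of the stated equality, however, it leaves a gap.

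The missing half follows from the same induction. Put $d=\depth R=\id_RN$ and $t=\depth_RM$, and claim $\Ext^{d-t}_R(M,N)\neq 0$. Since $\Ext$ vanishes in negative degrees, the claim includes $d-t\geq 0$.
\begin{itemize}
\item If $t=0$, choose an embedding $k\hookrightarrow M$. Because $\Ext^{d+1}_R(-,N)=0$, the induced map $\Ext^d_R(M,N)\to\Ext^d_R(k,N)$ is surjective. Moreover $\Ext^d_R(k,N)\neq0$, because $\id_RN=\sup\{i\mid\Ext^i_R(k,N)\neq 0\}$.
\item If $t>0$ and $x$ is $M$-regular, combine your long exact sequence with your vanishing $\Ext^{d-t+1}_R(M,N)=0$. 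This gives $\Ext^{d-t+1}_R(M/xM,N)\cong\Ext^{d-t}_R(M,N)/x\Ext^{d-t}_R(M,N)$, which is nonzero by induction. Hence $\Ext^{d-t}_R(M,N)\neq 0$.
\end{itemize}
For your maximal Cohen-Macaulay $M$, this yields $\depth R\geq\depth_RM=\dim R$. So $R$ is Cohen-Macaulay and $\Hom_R(M,N)\neq0$, with no appeal to Bass' conjecture at all.

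Conversely, Bass' conjecture was not needed for the part you did prove either. The inequality $\depth_RM=\dim R\geq\depth R$ already gives $\depth R-\depth_RM\leq 0$, so Step 2 yields $\Ext^i_R(M,N)=0$ for all $i\geq1$ directly.
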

\begin{proof} By Ischebeck's theorem (see \cite[2.6]{Is} or \cite[Exercise 3.1.24]{BH}), the hypothesis ${\rm id}_RN < \infty$ implies that
$e_R(M,N)={\rm depth}\,R-{\rm depth}_RM$ and, moreover, that $R$ must be Cohen-Macaulay by the classical Bass' conjecture (see \cite[Corollary 9.6.2 and Remark 9.6.4(ii)]{BH}). The result follows.
\end{proof}

The next lemma is a version of  Ischebeck's formula  in the context of the so-called \textit{Gorenstein injective dimension} introduced in \cite{EJ}, which is yet another homological invariant of a module. The Gorenstein injective dimension of an $R$-module $M$ is denoted $\gid_RM$ and generalizes the usual injective dimension in the sense that $\gid_RM\leq\id_RM$, with equality if $\id_RM<\infty$, according to \cite[Proposition 3.10]{CFH}. Another useful property is that, if $R$ is Gorenstein, then $\gid_RM<\infty$ for every $R$-module $M$; this follows from \cite[Theorem 3.2]{EJ2}.

\begin{lemma}{\rm (\cite[Theorem 2.10]{Sa})}\label{sa}
Let $M$ be a finite $R$-module with $\id_RM<\infty$ and let $N$ be a finite $R$-module with $\gid_RN<\infty$. Then, $e_R(M,N)=\depth R-\depth_RM$.
\end{lemma}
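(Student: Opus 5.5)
The plan is to reduce to a comparison with Lemma \ref{pdfinite} via Foxby equivalence, working over the completion. Throughout we tacitly assume $N\neq 0$, as in Lemma \ref{pdfinite}. First, since $\id_RM<\infty$ and $M\neq 0$, the ring $R$ is Cohen--Macaulay by the Bass conjecture, as in the proof of Lemma \ref{cv}.

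Next we pass to $\widehat{R}$. Completion is faithfully flat, so it preserves Ext of finite modules, depth, and $\depth R$. It also preserves finiteness of $\id$, and finiteness of $\gid$ for finite modules; for $\gid$ I would quote Christensen--Frankild--Holm. Hence we may assume $R$ is complete, so that $R$ has a canonical module $\omega=\omega_R$.

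Now use the Bass class description. A finite module of finite injective dimension has the form $M\cong \omega\otimes_R P$, where $P=\Hom_R(\omega,M)$ is finite of finite projective dimension (Sharp/Foxby equivalence). Moreover, by \cite{CFH}, the condition $\gid_RN<\infty$ means that $N$ lies in the Bass class. Concretely, $\Ext^{i}_R(\omega,N)=0$ for $i>0$ and $N\cong\omega\otimes_R\Hom_R(\omega,N)$; in particular $L:=\Hom_R(\omega,N)\neq0$. Since $\pd_RP<\infty$ and $\omega$ is maximal Cohen--Macaulay, Lemma \ref{tor0} gives $\Tor^R_{>0}(P,\omega)=0$, so $\omega\otimes^{\mathbf L}_RP\simeq M$. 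Derived adjunction then yields
$$\mathbf{R}\Hom_R(M,N)\simeq\mathbf{R}\Hom_R(P,\mathbf{R}\Hom_R(\omega,N))\simeq \mathbf{R}\Hom_R(P,L),$$
that is, $\Ext^j_R(M,N)\cong\Ext^j_R(P,L)$ for all $j$.

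Applying Lemma \ref{pdfinite} to the pair $P$, $L$ gives $e_R(M,N)=e_R(P,L)=\depth R-\depth_RP$. It remains to show $\depth_RP=\depth_RM$. Because $\Tor_{>0}(P,\omega)=0$ and $\pd P<\infty$, Auslander's depth formula gives
$$\depth_R(\omega\otimes P)=\depth_R\omega-\pd_RP=\depth R-\pd_RP=\depth_RP.$$

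The main obstacle is the input from Gorenstein homological algebra. This consists of the equivalence ``$\gid_RN<\infty$ if and only if $N$ is in the Bass class'' over a Cohen--Macaulay ring with canonical module, together with its compatibility with completion. I would take both from \cite{CFH}, rather than reprove them.
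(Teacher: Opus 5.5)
Your argument is correct, but it is not what the paper does: the paper gives no proof of this lemma and simply imports it from \cite[Theorem 2.10]{Sa}. Your route reduces the statement to the finite projective dimension case that the paper already has.

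\begin{itemize}
\item Sharp's equivalence gives $M\cong\omega\otimes_RP$ with $\pd_RP<\infty$.
\item Lemma \ref{tor0} makes this tensor product underived.
\item The Bass-class description of $N$ gives $\mathbf{R}\Hom_R(\omega,N)\simeq L$.
\item Adjunction then yields $\Ext^j_R(M,N)\cong\Ext^j_R(P,L)$ for all $j$.
\item Lemma \ref{pdfinite} and Auslander's depth formula finish.
\end{itemize}

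This is transparent, and it shows a bit more than the statement. The only property of $N$ you actually use is $\Ext^{>0}_R(\omega,N)=0$. For $N\neq0$, the nonvanishing $\Hom_R(\omega,N)\neq0$ is then automatic, since ${\rm Supp}\,\omega={\rm Spec}\,R$.

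The cost sits in the completion step. You need to pass $\gid_RN<\infty$ up to $\gid_{\widehat R}\widehat N<\infty$. This does not follow from faithful flatness the way the corresponding facts for Ext, depth and $\id$ do. The reason is that $\widehat R\otimes_R-$ need not take injective $R$-modules to injective $\widehat R$-modules. It is a separate statement about finite modules, so you should cite a source that proves it explicitly rather than fold it into the general remark that completion preserves the hypotheses.

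For this paper, however, you can drop completion entirely. Every application of the lemma (Theorem \ref{prescribedpdhom} and Corollary \ref{cor-e}) takes place over a Cohen--Macaulay ring that already has a canonical module, hence a dualizing complex. So the Bass-class characterization from \cite{CFH} applies directly to $R$, and your argument is then complete for every use the paper makes of the lemma.
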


Now we recall the local duality version for finite injective dimension.

\begin{lemma}{\rm (\cite[Theorem 2.1(b)]{HZ})}\label{dualityhom}
Let $R$ be a Cohen-Macaulay local ring of dimension $d$ with canonical module $\omega_R$, and let $M,N$ be finite $R$-modules. If $\id_RN<\infty$ then, for each $j\geq0$, there is an isomorphism $H^j_\mathfrak{m}(M,N)^\vee\cong\Ext^{d-j}_{\widehat{R}}(\Hom_{\widehat{R}}(\omega_{\widehat{R}},\widehat{N}),\widehat{M})$.
\end{lemma}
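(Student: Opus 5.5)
The statement just before the paper breaks off is Lemma \ref{dualityhom}, quoted from \cite[Theorem 2.1(b)]{HZ}. We may take it as a black box, but here is how I would prove it directly, following the pattern of Lemma \ref{duality}.

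\emph{Reductions.} Completion is faithfully flat. Both $\Ext$ and the direct limit defining $H^j_\mathfrak{m}(M,N)$ commute with $-\otimes_R\widehat R$, and since $H^j_\mathfrak{m}(M,N)$ is $\mathfrak m$-torsion it is unchanged by completion. Also $\id_{\widehat R}\widehat N<\infty$, and $\omega_{\widehat R}=\widehat{\omega_R}$. So we may assume $R$ is complete, and then Matlis duality is perfect on the relevant modules.

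\emph{Identifying $N$ as $\omega_R\otimes_R C$.} Set $C=\Hom_R(\omega_R,N)$. Since $R$ is Cohen--Macaulay with canonical module and $\id_RN<\infty$, Foxby/Sharp equivalence gives:
\begin{itemize}
\item[] $\pd_RC<\infty$, $\Ext^i_R(\omega_R,N)=0$ for $i>0$, and $N\cong\omega_R\otimes_RC$, with $\Tor_i^R(\omega_R,C)=0$ for $i>0$.
\end{itemize}
Equivalently, take a minimal injective resolution $I^\bullet$ of $N$. Each term is a finite sum of $E(R/\mathfrak p)$, and $\Hom_R(\omega_R,I^\bullet)$ is a finite flat resolution of $C$.

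\emph{Derived computation.} Write $H^j_\mathfrak{m}(M,N)=H^j\bigl(\Gamma_\mathfrak m\Hom_R(F_\bullet,N)\bigr)$, where $F_\bullet\to M$ is a free resolution. In the derived category this is
\[
\mathbf R\Gamma_\mathfrak m\mathbf R\Hom_R(M,\omega_R\otimes^{\mathbf L}_RC).
\]
Because $C$ has finite projective dimension, there is a natural isomorphism
\[
\mathbf R\Hom_R(M,\omega_R\otimes^{\mathbf L}C)\cong\mathbf R\Hom_R\bigl(\mathbf R\Hom_R(C,R),\,\mathbf R\Hom_R(M,\omega_R)\bigr),
\]
obtained from tensor evaluation together with the finite free resolution of $C$. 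We then apply $\mathbf R\Gamma_\mathfrak m$ and Matlis duality. Grothendieck local duality, $\mathbf R\Gamma_\mathfrak m(X)^\vee\cong\mathbf R\Hom_R(X,\omega_R)[d]$ for $X$ with finitely generated cohomology, applies together with the reflexivity $\mathbf R\Hom_R(\mathbf R\Hom_R(M,\omega_R),\omega_R)\cong M$. This turns the dual into
\[
\mathbf R\Hom_R(C,M)[d].
\]
Taking cohomology in degree $-j$ gives $H^j_\mathfrak m(M,N)^\vee\cong\Ext^{d-j}_R(C,M)$, which is the claim.

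\emph{Main obstacle.} The delicate point is keeping the derived-category isomorphisms honest. One must check that the tensor-evaluation step really needs only $\pd C<\infty$, with no finiteness assumed on $M$, and that the identification $N\simeq\omega_R\otimes^{\mathbf L}C$ actually holds. The latter follows from the vanishing of $\Tor_{>0}(\omega_R,C)$ and is exactly where the Cohen--Macaulay and canonical-module hypotheses enter.
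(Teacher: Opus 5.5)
The paper gives no proof of this lemma; it simply quotes \cite[Theorem 2.1(b)]{HZ}, so there is no in-paper argument to match. Your derived-category argument is a correct, self-contained proof. Set $C=\Hom_R(\omega_R,N)$. Sharp's equivalence gives $N\simeq\omega_R\otimes^{\mathbf L}_R C$ with $C$ perfect, hence
$\mathbf R\Hom_R(M,N)\simeq C\otimes^{\mathbf L}_R\mathbf R\Hom_R(M,\omega_R)$.
This is a bounded complex with finitely generated cohomology, and boundedness is what Grothendieck duality needs here. Then Grothendieck duality with dualizing complex $\omega_R[d]$, adjunction, and biduality $\mathbf R\Hom_R(\mathbf R\Hom_R(M,\omega_R),\omega_R)\simeq M$ give $\mathbf R\Hom_R(C,M)[d]$, exactly as you say.

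Your route is more transparent and more uniform than the citation, which only buys brevity. The same few lines, with $M$ perfect instead of $C$, also prove Lemma \ref{duality}. In that case $\mathbf R\Hom_R(M,N)\simeq\mathbf R\Hom_R(M,R)\otimes^{\mathbf L}_R N$ dualizes to $\mathbf R\Hom_R(N,M\otimes^{\mathbf L}_R\omega_R)[d]$, and $M\otimes^{\mathbf L}_R\omega_R=M\otimes_R\omega_R$ by Lemma \ref{tor0}.

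There are two slips to fix; neither is load-bearing.

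First, $H^j(\Gamma_\mathfrak m\Hom_R(F_\bullet,N))$ is not generalized local cohomology. Since $\Gamma_\mathfrak m\Hom_R(F_\bullet,N)=\Hom_R(F_\bullet,\Gamma_\mathfrak m N)$, it computes $\Ext^j_R(M,\Gamma_\mathfrak m N)$; take $M=R$ to see the difference. The correct model is $\Gamma_\mathfrak m\Hom_R(M,I^\bullet)$ with $I^\bullet$ an injective resolution of $N$, because $\varinjlim_n\Hom_R(M/\mathfrak m^nM,-)=\Gamma_\mathfrak m\Hom_R(M,-)$. Identifying this with $\mathbf R\Gamma_\mathfrak m\mathbf R\Hom_R(M,N)$ uses that $\Hom_R(M,I)$ is $\Gamma_\mathfrak m$-acyclic for $M$ finite and $I$ injective. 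This is the same fact that underlies the spectral sequence of Lemma \ref{ss2}.

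Second, the ``equivalently'' sentence is false on two counts.
\begin{enumerate}
\item A given term of a minimal injective resolution of a finite module may involve infinitely many primes; $R=k[\![x,y]\!]$ in degree $1$ is an example.
\item $\Hom_R(\omega_R,E(R/\mathfrak p))$ is not flat in general. For instance, $\Hom_R(\omega_R,E(R/\mathfrak m))=\omega_R^\vee\cong H^d_\mathfrak m(R)$ by local duality. This is nonzero and $\mathfrak m$-torsion, so it is not flat when $d>0$.
\end{enumerate}
These modules only have finite flat dimension. That still yields $\pd_RC<\infty$, but it is cleaner to cite Sharp's theorem for the four facts you list. Note that $\Tor^R_{>0}(\omega_R,C)=0$ is already Lemma \ref{tor0}.
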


\begin{definition}\rm Let $R$ be a local ring admitting a canonical module $\omega_R$ and let $M, N$ be finite $R$-modules. Given positive integers $n\leq s\leq {\rm dim}\,R$,  we say for convenience that the pair $M, N$ satisfies the $({\bf V}^{n, s})$ condition if $$\Ext^j_R(N,M)=\Ext^{j+1}_R(\Hom_R(\Omega\omega_R,N),M)=0\quad \mbox{for \,all} \quad j=n, \ldots, s.$$ 
    
\end{definition}

The theorem below is the main result of this section.

\begin{theorem}\label{prescribedpdhom}
Let $R$ be a Cohen-Macaulay local ring of dimension $d$ with canonical module $\omega_R$. If $M, N$ is a pair of finite $R$-modules with $\id_RN<\infty$ such that the $({\bf V}^{n, s})$ and $({\bf D}_{s})$ conditions hold for some positive integers $n\leq s\leq d$, then $\depth_RN>d-n$. If in addition $\gid_RM<\infty$, then $e_R(N,M)<n$.
\end{theorem}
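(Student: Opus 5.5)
The plan is to follow the proof of Theorem \ref{prescribedpd}, with Lemma \ref{dualityhom} in place of Lemma \ref{duality}. The spectral sequence of Lemma \ref{ss2} and the $({\bf D}_s)$ condition (for the pair $M,N$) will control the low-degree generalized local cohomology $H^j_\mathfrak{m}(M,N)$. The $({\bf V}^{n,s})$ condition, through duality, will control the degrees $d-s\le j\le d-n$. Lemma \ref{depthlemma} then turns this vanishing into the depth bound on $N$. As before, if $s\ge d$ the conclusion is immediate from Lemma \ref{depthlemma} and the spectral sequence alone, so I would work with $n\le s\le d$. I would also pass to the completion first: $\id$, depth, Ext modules and the hypotheses are all unaffected, so we may assume $R$ is complete.

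\textbf{Step 1: translate $({\bf V}^{n,s})$ into vanishing of $\Ext^j_R(\Hom_R(\omega_R,N),M)$ for $n\le j\le s$.} Take the minimal presentation $0\to\Omega\omega_R\to F\to\omega_R\to 0$ with $F$ finite free. Since $\id_RN<\infty$ and $\omega_R$ is maximal Cohen--Macaulay, Lemma \ref{cv} gives $\Ext^1_R(\omega_R,N)=0$. Applying $\Hom_R(-,N)$ therefore yields a short exact sequence
$$0\to\Hom_R(\omega_R,N)\to\Hom_R(F,N)\to\Hom_R(\Omega\omega_R,N)\to0,$$
with $\Hom_R(F,N)\cong N^{r}$. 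Applying $\Hom_R(-,M)$ to it gives exact sequences
$$\Ext^j_R(N^r,M)\to\Ext^j_R(\Hom_R(\omega_R,N),M)\to\Ext^{j+1}_R(\Hom_R(\Omega\omega_R,N),M).$$
The two outer terms vanish for $j=n,\dots,s$ by $({\bf V}^{n,s})$, so the middle one does too.

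\textbf{Step 2: duality and the spectral sequence.} By Lemma \ref{dualityhom}, $H^j_\mathfrak{m}(M,N)^\vee\cong\Ext^{d-j}_R(\Hom_R(\omega_R,N),M)$. By Step 1 this is zero for $j=d-s,\dots,d-n$, and since Matlis duality is faithful, $H^j_\mathfrak{m}(M,N)=0$ in that range. In the spectral sequence $E_2^{p,q}=H^p_\mathfrak{m}(\Ext^q_R(M,N))\Rightarrow H^{p+q}_\mathfrak{m}(M,N)$, the condition $({\bf D}_s)$ gives $E_2^{p,q}=0$ whenever $p+q<d-s$, and hence $H^j_\mathfrak{m}(M,N)=0$ for all $j<d-s$. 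Combining the two ranges, $H^j_\mathfrak{m}(M,N)=0$ for all $j\le d-n$. Lemma \ref{depthlemma} then forces $\depth_RN>d-n$.

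\textbf{Step 3: the second assertion.} Apply Lemma \ref{sa} with the roles swapped: $N$ has finite injective dimension and $M$ has finite Gorenstein injective dimension. This gives $e_R(N,M)=\depth R-\depth_RN=d-\depth_RN<n$.

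The only genuinely new point compared with Theorem \ref{prescribedpd} is Step 1. There the key input is $\Ext^1_R(\omega_R,N)=0$, which replaces the Tor vanishing of Lemma \ref{tor0} and comes from Lemma \ref{cv}. I expect this to be the main place needing care, along with checking that the index shift in Lemma \ref{dualityhom} matches the range $n\le j\le s$ exactly.
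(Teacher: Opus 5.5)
Your proposal is correct and follows the paper's proof essentially step for step. The argument runs: $\Ext^1_R(\omega_R,N)=0$ via Lemma \ref{cv}, then the resulting short exact sequence together with $({\bf V}^{n,s})$ to kill $\Ext^j_R(\Hom_R(\omega_R,N),M)$ for $n\le j\le s$, then Lemma \ref{dualityhom} for $d-s\le j\le d-n$, the spectral sequence with $({\bf D}_s)$ for $j<d-s$, and finally Lemmas \ref{depthlemma} and \ref{sa}. The only blemish is your aside that the case $s\ge d$ follows from the spectral sequence alone. That is backwards: for $s=d$ the condition $({\bf D}_d)$ is vacuous, so the duality step does all the work. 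It is harmless, since your Step 2 already treats $s=d$ uniformly.
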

\begin{proof}
By Lemma \ref{cv} we have $\Ext^1_R(\omega_R, N)=0$, hence there is a short exact sequence $$\xymatrix@=1em{0\ar[r] & \Hom_R(\omega_R,N)\ar[r] & \Hom_R(F,N)\ar[r] & \Hom_R(\Omega\omega_R,N)\ar[r] & 0}$$ for some finite free $R$-module $F$. This yields, for each $i\geq0$, an exact sequence $$\xymatrix@=1em{\Ext^i_R(\Hom_R(F,N),M)\ar[r] & \Ext^i_R(\Hom_R(\omega_R,N),M)\ar[r] & \Ext^{i+1}_R(\Hom_R(\Omega\omega_R,N),M).}$$ Using the $({\bf V}^{n, s})$ hypothesis, we obtain $\Ext^i_R(\Hom_R(\omega_R,N),M)=0$ for all $i=n, \ldots, s$. Now it should be noticed that $R$ can be assumed to be complete, and therefore Lemma \ref{dualityhom} ensures that $H^j_\mathfrak{m}(M,N)=0$ for all $j=d-s, \ldots, d-n$. On the other hand, by the $({\bf D}_{s})$ condition, the spectral sequence given in Lemma \ref{ss2}, $$E_2^{p,q}=H^p_\mathfrak{m}(\Ext^q_R(M,N))\Rightarrow_p H^{p+q}_\mathfrak{m}(M,N)$$ is such that $E_2^{p,q}=0$ for all $p<d-s-q$. By convergence, it follows that $H^j_\mathfrak{m}(M,N)=0$ for all $j<d-s$. Summing up, we have $H^j_\mathfrak{m}(M,N)=0$ for all  $j<d-n$. Thus, Lemma \ref{depthlemma} gives $\depth_RN>d-n$.
Finally, if $\gid_RM<\infty$ then, by Lemma \ref{sa}, we conclude that $e_R(N,M)=d-\depth_RN<n$.
\end{proof}

\begin{remark}\label{remarkhom}\rm
Concerning the vanishing hypothesis of Theorem \ref{prescribedpdhom}, it is clear from the proof above that the condition $\Ext^j_R(\Hom_R(\omega_R,N),M)=0$ for all $j=n, \ldots, s$ suffices to ensure the same conclusions.

\end{remark}

\begin{corollary}\label{prescribedpdhom-cor}
Let $R$ be a Cohen-Macaulay local ring of dimension $d$ with canonical module $\omega_R$. If $M, N$ is a pair of finite $R$-modules with $\id_RN<\infty$ such that the $({\bf V}^{1, s})$ and $({\bf D}_{s})$ conditions hold for some positive integer $s\leq d$, then $N$ is isomorphic to a direct sum of finitely many copies of $\omega_R$. If in addition $\gid_RM<\infty$, then $e_R(\omega_R,M)=0$.
\end{corollary}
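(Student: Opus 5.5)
This is the case $n=1$ of Theorem \ref{prescribedpdhom}, followed by a structure result for modules of finite injective dimension that are maximal Cohen--Macaulay. First, Theorem \ref{prescribedpdhom} with $n=1$ gives $\depth_RN>d-1$, so $\depth_RN=d$ whenever $N\neq 0$. (If $N=0$ the statement is trivial, taking the empty direct sum.) Thus $N$ is a maximal Cohen--Macaulay $R$-module of finite injective dimension.

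Next we invoke the standard fact that over a Cohen--Macaulay local ring with canonical module, a maximal Cohen--Macaulay module of finite injective dimension is isomorphic to $\omega_R^{r}$ for some $r$ (\cite[Theorem 3.3.28]{BH}). If one prefers to argue directly, set $C=\Hom_R(\omega_R,N)$. Since $N$ is maximal Cohen--Macaulay, the natural map $\omega_R\otimes_R C\to N$ is an isomorphism and $\Ext^{i}_R(\omega_R,N)=0$ for $i\geq1$; the latter also follows from Lemma \ref{cv}. Since $\id_RN<\infty$, the module $C$ has finite projective dimension. It is moreover maximal Cohen--Macaulay, by the duality $\Hom_R(\omega_R,-)$ between maximal Cohen--Macaulay modules of finite injective dimension and those of finite projective dimension. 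The Auslander--Buchsbaum formula then makes $C$ free, so $N\cong\omega_R\otimes_R C\cong\omega_R^{r}$. I expect this identification step to be the only one needing care; citing \cite{BH} is the cleanest route.

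For the second assertion, assume further that $\gid_RM<\infty$. Theorem \ref{prescribedpdhom} yields $e_R(N,M)<1$, that is, $\Ext^{j}_R(N,M)=0$ for all $j\geq1$. Substituting $N\cong\omega_R^{r}$ with $r\geq1$ gives $\Ext^j_R(\omega_R,M)^{r}=0$ for $j\geq1$, hence $e_R(\omega_R,M)=0$. Alternatively, since $\id_R\omega_R<\infty$, we may apply Lemma \ref{sa} directly: $e_R(\omega_R,M)=\depth R-\depth_R\omega_R=d-d=0$.
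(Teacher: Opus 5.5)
Your main line is the paper's proof. Theorem \ref{prescribedpdhom} with $n=1$ makes $N$ maximal Cohen--Macaulay (or zero), and the standard fact that a maximal Cohen--Macaulay module of finite injective dimension is $\omega_R^{\oplus t}$ finishes the first assertion. The paper leaves the second assertion implicit, and your deduction from $e_R(N,M)<1$ is the intended one. Your alternative via Lemma \ref{sa} for the pair $\omega_R, M$ is also correct, and it shows that this part uses none of the $({\bf V}^{1,s})$, $({\bf D}_s)$ hypotheses, nor $N$ at all (for $M\neq 0$).

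One correction to your optional direct argument. Maximal Cohen--Macaulayness of $N$ alone does not make the natural map $\omega_R\otimes_R\Hom_R(\omega_R,N)\to N$ an isomorphism. For $N=R$ over a non-Gorenstein ring, the image of that map is the trace ideal of $\omega_R$, which is proper because $\omega_R$ is indecomposable and not free. Nor does maximal Cohen--Macaulayness give $\Ext^i_R(\omega_R,N)=0$ for $i\geq 1$; take $N=k$ over an artinian non-Gorenstein ring. Both facts hold here only because $\id_RN<\infty$: Lemma \ref{cv} gives the vanishing, and Sharp's equivalence between finite modules of finite injective and finite projective dimension gives the isomorphism. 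So that route can be repaired, but the justification as written is wrong.
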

\begin{proof} Applying Theorem \ref{prescribedpdhom} with $n=1$, we obtain that $N$ is maximal Cohen-Macaulay. But it is well-known that a  maximal Cohen-Macaulay $R$-module of finite injective dimension is necessarily isomorphic  to $\omega_R^{\oplus t}$ for some $t\geq 1$.
\end{proof} 

In another application, we obtain the following criterion for prescribed bound on projective dimension for certain modules of finite injective dimension.

\begin{corollary}\label{cor-e} Let $R$ be a Cohen-Macaulay local ring of dimension $d$ with canonical module $\omega_R$. Let $M, N$ be a pair of finite $R$-modules with $\id_RN<\infty$ and $\id_R\Hom_R(\omega_R,N)<\infty$. If $\Ext^j_R(N,M)=0$ for all $j=n, \ldots, s$ and the  $({\bf D}_{s})$ condition holds for the pair $M, N$, for some positive integers $n\leq s\leq d$, then $\pd_RN<n$.
\end{corollary}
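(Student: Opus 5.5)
We reduce Corollary \ref{cor-e} to Theorem \ref{prescribedpdhom}, in the form of Remark \ref{remarkhom}, and then convert the depth bound on $N$ into a bound on $\pd_RN$.

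The first step is to show that $N\cong\Hom_R(\omega_R,N)\otimes_R\omega_R$ and that $\Hom_R(\omega_R,N)$ has finite projective dimension. Since $\id_R N<\infty$ over the Cohen--Macaulay ring $R$ with canonical module, $N$ lies in the Bass class. Foxby equivalence then gives
\[
N\cong \omega_R\otimes_R\Hom_R(\omega_R,N), \qquad \pd_R\Hom_R(\omega_R,N)<\infty .
\]
We also have the hypothesis $\id_R\Hom_R(\omega_R,N)<\infty$. A finite module with both finite projective dimension and finite injective dimension forces $R$ to be Gorenstein; this follows, for instance, from Foxby's theorem, which is a consequence of the Bass/Ischebeck circle of ideas already invoked in Lemma \ref{cv}. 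Hence $\omega_R\cong R$ and $\Hom_R(\omega_R,N)\cong N$. In particular $\pd_RN<\infty$. I expect this step to be the main point to justify carefully: if one prefers to avoid Foxby equivalence, the route is to note directly that $\Hom_R(\omega_R,N)$ has finite injective dimension and that $\Ext$ against it behaves like $\Ext$ against $N$.

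Once $R$ is Gorenstein with $\omega_R=R$, the hypothesis $\Ext^j_R(N,M)=0$ for $j=n,\ldots,s$ reads exactly as $\Ext^j_R(\Hom_R(\omega_R,N),M)=0$ for $j=n,\ldots,s$. Remark \ref{remarkhom} says this is all that the proof of Theorem \ref{prescribedpdhom} uses in place of $({\bf V}^{n,s})$. Together with $({\bf D}_s)$ for the pair $M,N$ and $\id_RN<\infty$, that proof runs verbatim: generalized local duality (Lemma \ref{dualityhom}) kills $H^j_\mathfrak{m}(M,N)$ for $d-s\le j\le d-n$; the spectral sequence of Lemma \ref{ss2} together with $({\bf D}_s)$ kills $H^j_\mathfrak{m}(M,N)$ for $j<d-s$; and Lemma \ref{depthlemma} gives $\depth_RN>d-n$.

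Finally, since $\pd_RN<\infty$, the Auslander--Buchsbaum formula (equivalently Lemma \ref{pdfinite}) gives
\[
\pd_RN=\depth R-\depth_RN=d-\depth_RN<n,
\]
as claimed.
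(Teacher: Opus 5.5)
Your proposal is correct and follows the paper's route: you show $R$ is Gorenstein by combining $\pd_R\Hom_R(\omega_R,N)<\infty$ with $\id_R\Hom_R(\omega_R,N)<\infty$ (Foxby). You then run the argument of Theorem \ref{prescribedpdhom} with $\omega_R\cong R$ to get $\depth_RN>d-n$, and finish with Auslander--Buchsbaum. The differences are only in citations: you get $\pd_R\Hom_R(\omega_R,N)<\infty$ from Foxby equivalence rather than from the paper's Sazeedeh formula $\pd_R\Hom_R(\omega_R,N)=e_R(N,\omega_R)<\infty$, and you rightly use only the depth conclusion, which does not need the $\gid_RM<\infty$ hypothesis the paper mentions.
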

\begin{proof} By virtue of $\id_R\omega_R<\infty$, we obtain ${\rm Gid}_R\omega_R<\infty$. Since in addition $\id_RN<\infty$, we are in a position to apply \cite[Corollary 2.13]{Sa} to get the equality $$\pd_R{\rm Hom}_R(\omega_R,N)=e_R(N, \omega_R).$$ On the other hand, Lemma \ref{sa} yields $e_R(N, \omega_R)<\infty$. Therefore $\pd_R\Hom_R(\omega_R,N)<\infty$, and since by hypothesis $\id_R\Hom_R(\omega_R,N)<\infty$, we derive that $R$ must be Gorenstein by a classical fact (see \cite[Corollary 4.4]{Fox}). Now we have $\Omega \omega_R=0$ and ${\rm Gid}_RM<\infty$, so that Theorem \ref{prescribedpdhom} yields $\depth_RN>d-n$. Finally, because $R$ is Gorenstein and $\id_RN<\infty$, we have $\pd_RN<\infty$ (see \cite[Exercise 3.1.25]{BH}) and then the Auslander-Buchsbaum formula gives $\pd_RN<n$.
\end{proof}

Notice that the case $n=1$ of Corollary \ref{cor-e}
gives a criterion for the freeness of $N$.

\section{Prescribed bound on projective dimension via weakly full ideals}\label{sectionfinitepd}

In this last section,  we establish consequences of Theorem \ref{prescribedpdhom}  which deal with finiteness of projective dimension over a local ring $(R, {\mathfrak m})$ via the existence of  a weakly $\mathfrak{m}$-full ideal (see Definition \ref{weakly-def} below) satisfying suitable hypotheses.

To this end, let us invoke the notion of strongly rigid module and then a series of preparatory facts. First recall that a finite $R$-module $M$ is said to be locally free on the punctured spectrum of $R$ if $M_{\mathfrak p}$ is a free $R_{\mathfrak p}$-module (e.g., $M_{\mathfrak p}=0$) for every ${\mathfrak p}\in {\rm Spec}\,R\setminus \{{\mathfrak m}\}$.  

\begin{definition}\rm Let $R$ be a local ring. A finite $R$-module $M$ is said to be {\it strongly rigid} if $\pd_RN<\infty$ whenever $N$ is a finite $R$-module with $\Tor_j^R(M,N)=0$ for some $j\geq1$.
\end{definition}

\begin{lemma}{\rm (\cite[Proposition 3.6]{CGZS})}\label{stronglyrigidfiniteid}
Let $R$ be a Cohen-Macaulay local ring of dimension $d$ with canonical module, and let $N$ be a finite $R$-module. If there exists a non-zero strongly rigid $R$-module $M$ which is locally free on the punctured spectrum of $R$ and satisfies $\Ext^i_R(M,N)=0$ for some $i\geq d+1$, then $\id_RN<\infty$. 
\end{lemma}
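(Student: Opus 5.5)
Since Lemma \ref{stronglyrigidfiniteid} is quoted from \cite[Proposition 3.6]{CGZS}, the argument I sketch reconstructs the standard one. The plan is to transfer the Ext vanishing into a Tor vanishing against a suitable module, apply strong rigidity to obtain finite projective dimension, and then convert this into finite injective dimension of $N$. First I reduce to $R$ complete; this is harmless because every hypothesis and the conclusion $\id_RN<\infty$ are stable under completion. Strong rigidity of $\widehat{M}$ over $\widehat{R}$ needs a brief check, or else I carry out the Tor argument over $R$ before completing. Next I replace $N$ by a high cosyzygy relative to $\omega_R$. Concretely, take a proper resolution of $N$ by modules of the form $\omega_R^{\oplus t}$ and maximal Cohen--Macaulay modules, which is a Cohen--Macaulay approximation in the sense of Auslander--Buchweitz. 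This gives $0\to N\to Y\to X\to 0$ with $\id_RY<\infty$ and $X$ maximal Cohen--Macaulay. By Lemma \ref{cv} and dimension shifting, $\id_RN<\infty$ is equivalent to $\id_R$ of a suitable maximal Cohen--Macaulay module being finite.

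The core step converts Ext into Tor using the hypothesis that $M$ is locally free on the punctured spectrum. For such $M$ and $j\ge 1$, the modules $\Ext^j_R(M,-)$ and $\Tor_j^R(M,-)$ have finite length. Local duality over the Cohen--Macaulay ring $R$ should then give
$$\Ext^i_R(M,N)^\vee\cong \Tor^R_{i-d}\bigl(M,\Hom_R(N',\omega_R)\bigr)$$
for $i\ge d+1$, when $N'$ is maximal Cohen--Macaulay. The mechanism is that $\mathbf{R}\Hom(M,N)$ dualizes against $M\otimes^{\mathbf L}N^\dagger$ shifted by $d$, and the finite length of the cohomology in high degrees makes the spectral sequence collapse. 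The index condition $i\ge d+1$ ensures that the shifted index $i-d$ is at least $1$. To set this up I first reduce $N$ to a maximal Cohen--Macaulay module $N'$ by taking $d$ syzygies of $N$. Since $\Ext^{i}_R(M,N)$ relates to $\Ext^{i+d}_R(M,N')$ only via connecting maps, I would instead work with $N$ directly through a derived-category duality $\mathbf{R}\Hom_R(N,\omega_R)$. That complex is bounded when localized appropriately, and the finite length of Tor and Ext against $M$ makes everything concentrated.

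From $\Tor^R_{i-d}(M,N^\dagger)=0$ with $i-d\ge1$, strong rigidity of $M$ (needing $M\neq0$) gives $\pd_RN^\dagger<\infty$, where $N^\dagger=\Hom_R(N',\omega_R)$ or its complex analogue. Dualizing into $\omega_R$ turns finite projective dimension into finite injective dimension. Indeed, $\Hom_R(F_\bullet,\omega_R)$ for a finite free resolution $F_\bullet$ is a finite resolution by copies of $\omega_R$, and for maximal Cohen--Macaulay modules $\Hom_R(\Hom_R(N',\omega_R),\omega_R)\cong N'$. This yields $\id_RN'<\infty$, and hence $\id_RN<\infty$ by the reduction above.

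I expect the main obstacle to be the precise Ext-to-Tor duality: the index bookkeeping, and passing from an arbitrary $N$ to the maximal Cohen--Macaulay setting without losing the single vanishing degree $i$. My first attempt would be to prove $\Ext^i_R(M,N)^\vee\cong\Tor^R_{i-d}(M,\mathbf{R}\Hom_R(N,\omega_R))$ as hypertor, using the finite length of $\Ext^{\geq1}$ against $M$. I would then kill the lower-degree contributions of the complex $\mathbf{R}\Hom_R(N,\omega_R)$, which is concentrated in degrees $0,\dots,d$, by noting that they only affect Tor in degrees at most $i-d-1+d$. If this fails, the fallback is to apply strong rigidity to a Cohen--Macaulay approximation of the Matlis/canonical dual.
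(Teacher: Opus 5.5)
The paper does not prove this lemma; it only quotes it from \cite[Proposition 3.6]{CGZS}. Your central mechanism is the right one. Take $X$ maximal Cohen--Macaulay and set $X^\dagger=\Hom_R(X,\omega_R)$. Then $\mathbf{R}\Hom_R(M,X)\simeq\mathbf{R}\Hom_R(M\otimes^{\mathbf L}_RX^\dagger,\omega_R)$. In the spectral sequence $\Ext^p_R(\Tor^R_q(M,X^\dagger),\omega_R)\Rightarrow\Ext^{p+q}_R(M,X)$, each row $q\geq1$ sits in column $p=d$ (finite length), and row $q=0$ sits in columns $p\leq d$. Hence $\Ext^n_R(M,X)\cong\Ext^d_R(\Tor^R_{n-d}(M,X^\dagger),\omega_R)$ for all $n\geq d+1$. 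Strong rigidity then gives $\pd_RX^\dagger<\infty$, so $X^\dagger$ is free by Auslander--Buchsbaum, and $X\cong\omega_R^{\oplus t}$. No completion is needed, because $\Ext^d_R(L,\omega_R)\neq0$ for every nonzero $L$ of finite length. This also settles your open worry about strong rigidity of $\widehat{M}$.

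The genuine gap is the reduction from an arbitrary $N$ to a maximal Cohen--Macaulay module at the \emph{same} index $i$; none of your three routes achieves it.

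\emph{The sequence $0\to N\to Y\to X\to0$.} You get $\Ext^i_R(M,N)\cong\Ext^{i-1}_R(M,X)$ only when $\Ext^{i-1}_R(M,Y)=0$. Ischebeck guarantees this for $i-1>d-\depth_RM$, but it fails for $i=d+1$ when $\depth_RM=0$, for example $M=R/I$ with $I$ $\mathfrak{m}$-primary, the case used in Section \ref{sectionfinitepd}. There you only learn that $\Ext^d_R(M,Y)\to\Ext^d_R(M,X)$ is onto. Moreover $\Ext^d_R(M,X)\cong H^0_{\mathfrak m}(M\otimes_RX^\dagger)^\vee$ carries no Tor information.

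\emph{Syzygies.} These shift the index by $d$ through uncontrolled connecting maps, as you observed.

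\emph{Hypertor.} The identity $\Ext^i_R(M,N)^\vee\cong\Tor^R_{i-d}(M,\mathbf{R}\Hom_R(N,\omega_R))$ is in fact true for $i\geq d+1$. But you cannot pass to a module by degree counting. The module $\Ext^k_R(N,\omega_R)$ sits in homological degree $-k$ and contributes $\Tor^R_{i-d+k}(M,\Ext^k_R(N,\omega_R))$ to the same total degree $i-d$. So nothing is killed, and strong rigidity applies only to modules.

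\emph{The fix.} Use the other Auslander--Buchweitz sequence, the maximal Cohen--Macaulay approximation $0\to Y\to X\to N\to0$ with $X$ maximal Cohen--Macaulay and $\id_RY<\infty$. Ischebeck gives $\Ext^j_R(M,Y)=0$ for all $j>\depth R-\depth_RM$, in particular for all $j\geq d+1$. Hence $\Ext^i_R(M,N)\cong\Ext^i_R(M,X)$ with no shift, and $\id_RN<\infty$ if and only if $\id_RX<\infty$. In your derived language: $\mathbf{R}\Hom_R(Y,\omega_R)$ is a perfect complex in homological degrees $-d,\dots,0$, so $\Tor^R_j(M,\mathbf{R}\Hom_R(N,\omega_R))\cong\Tor^R_j(M,X^\dagger)$ for $j\geq1$. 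With this replacement your core step applies directly at the index $i$, and the proof closes.
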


\begin{lemma}\label{stronglyrigidgor}{\rm (\cite[Theorem 7.3]{CGZS})}
Let $R$ be a Cohen-Macaulay local ring. If $\gid_RM<\infty$ for some non-zero strongly rigid $R$-module $M$, then $R$ is Gorenstein.
\end{lemma}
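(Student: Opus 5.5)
The plan is to use strong rigidity with the test module $N=\omega_R$. If I can find a single $j\geq 1$ with $\Tor_j^R(M,\omega_R)=0$, strong rigidity of $M$ gives $\pd_R\omega_R<\infty$. A maximal Cohen--Macaulay module of finite projective dimension is free by the Auslander--Buchsbaum formula, so $\omega_R$ would be free, in fact $\omega_R\cong R$ since it has rank one. Then $R$ is Gorenstein. So everything reduces to producing one vanishing $\Tor$ between $M$ and $\omega_R$ from the hypothesis $\gid_RM<\infty$.

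First I would reduce to the case where $R$ is complete, so that Foxby equivalence for the dualizing module is available. This needs two facts. For finite modules, $\gid$ can be tested after completion. Strong rigidity should also pass to $\widehat M$ over $\widehat R$, at least for test modules of the form $\widehat N$, and $N=\omega_R$ is such a module. Since $R$ is Gorenstein if and only if $\widehat R$ is, nothing is lost. I would then use the characterisation of finite Gorenstein injective dimension over a Cohen--Macaulay ring with canonical module: $M$ lies in the Bass class of $\omega_R$. Concretely, set $K=\Hom_R(\omega_R,M)$. Then $\Ext^{i}_R(\omega_R,M)=0$ for all $i\geq1$, $\Tor_i^R(\omega_R,K)=0$ for all $i\geq1$, and $\omega_R\otimes_RK\cong M$. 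Moreover $K$ is a finite module of finite G-dimension, via the Foxby equivalence between the Bass and Auslander classes. In derived terms these conditions say $M\simeq \omega_R\otimes^{\mathbf L}_RK$.

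The vanishing I would aim for is $\Tor_j^R(M,\omega_R)=0$ for $j\gg0$, for instance $j>d$, computed through
$$M\otimes^{\mathbf L}_R\omega_R\simeq K\otimes^{\mathbf L}_R(\omega_R\otimes^{\mathbf L}_R\omega_R).$$
Passing to Matlis duals, this is equivalent to the vanishing of $\Ext^j_R(K,\Hom_R(\omega_R\otimes^{\mathbf L}\omega_R,E))$ in high degrees, where $E$ is the injective hull of the residue field. The intended input is that a module of finite G-dimension has $\Ext^{j}_R(K,X)=0$ for $j>\gdim_RK$ whenever $X$ has finite injective dimension.

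This is the step I expect to be the main obstacle. It is not clear that $\Hom_R(\omega_R\otimes^{\mathbf L}\omega_R,E)$ has bounded injective dimension unless one already knows something close to Gorensteinness. If this route stalls, I would choose a different finite test module $N$ built from $K$ itself. For example, take $N$ to be a high syzygy of the Auslander transpose of $K$, or of $\Hom_R(K,\omega_R)$, so that $\Tor_j^R(M,N)=0$ follows directly from $\Tor^R_{\geq1}(\omega_R,K)=0$ together with the G-dimension vanishing for $K$. I would then argue that $\pd_RN<\infty$ forces $\omega_R$ to have finite projective dimension, and conclude as above.
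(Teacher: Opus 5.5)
The paper gives no proof of this lemma; it only cites \cite[Theorem 7.3]{CGZS}. Judged on its own, your proposal has a genuine gap at the step you flag. Nothing you write produces a $j\geq1$ with $\Tor_j^R(M,\omega_R)=0$, and the route through $M\otimes^{\mathbf L}_R\omega_R\simeq K\otimes^{\mathbf L}_R(\omega_R\otimes^{\mathbf L}_R\omega_R)$ cannot produce it. That step uses only Bass-class membership of $M$, not strong rigidity. It therefore applies equally to $M=\omega_R$, where $K=R$ has G-dimension $0$, and would give $\Tor_j^R(\omega_R,\omega_R)=0$ for $j>d$ over every Cohen--Macaulay ring with canonical module. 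This is false. For $R=k[x,y]/(x,y)^2$, which is not Gorenstein, the first syzygy of the non-free module $\omega_R$ is some $k^{\oplus b}$ with $b\geq1$. Hence $\Tor_j^R(\omega_R,\omega_R)\cong\Tor_{j-1}^R(k,\omega_R)^{\oplus b}\neq0$ for all $j\geq2$. Your fallback, using syzygies of transposes of $K$ or of $\Hom_R(K,\omega_R)$, is not an argument. You never say why $\Tor_j^R(M,N)$ would vanish, nor how $\pd_RN<\infty$ for a module built from $K$ would say anything about $\omega_R$.

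The missing idea is to use strong rigidity in its Ext form, Lemma \ref{rigidtestpd}, rather than the Tor definition. The vanishing it needs is already in your list: $\Ext^i_R(\omega_R,M)=0$ for all $i\geq1$. Equivalently, Lemma \ref{sa} applied to the pair $\omega_R, M$ gives $e_R(\omega_R,M)=\depth R-\depth_R\omega_R=0$. Take $N=\omega_R$ and $i=\max\{1,d\}$ in Lemma \ref{rigidtestpd}. This gives $\pd_R\omega_R=e_R(\omega_R,M)=0$, so $\omega_R$ is free, hence $\omega_R\cong R$ and $R$ is Gorenstein. No $K$, Foxby equivalence, or $\omega_R\otimes^{\mathbf L}_R\omega_R$ is needed.

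Your reduction to $\widehat R$ also only works when $R$ already has a canonical module. Otherwise $\omega_{\widehat R}$ is not of the form $\widehat N$, and strong rigidity of $M$ over $R$ says nothing about it. In that case, let $E=E_R(k)$ and use the finite-length module $N=\Hom_R(R/\mathbf{x}R,E)$, where $\mathbf{x}$ is a maximal $R$-sequence; then $\id_RN=d$. Lemma \ref{sa} gives $\Ext^{d+1}_R(N,M)=0$, Lemma \ref{rigidtestpd} gives $\pd_RN<\infty$, and \cite[Corollary 4.4]{Fox} then forces $R$ to be Gorenstein.
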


\begin{corollary}\label{stronglyrigidfinitepd}
Let $R$ be a Cohen-Macaulay local ring of dimension $d$ with canonical module $\omega_R$, and let $N$ be a finite $R$-module. Suppose there exists a non-zero strongly rigid $R$-module $M$ which is locally free on the punctured spectrum of $R$, such that
$\gid_RM<\infty$ and $\Ext^i_R(M,N)=0$ for some $i\geq d+1$. If, in addition, $\Ext^j_R(N,M)=0$ for all $j=n, \ldots, s$ and $({\bf D}_{s})$ holds for the pair $M, N$, for some positive integers $n\leq s\leq d$, then $\pd_RN<n$.
\end{corollary}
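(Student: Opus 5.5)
The plan is to reduce to Corollary \ref{cor-e}, or more directly to Theorem \ref{prescribedpdhom} once we know $R$ is Gorenstein. The first step is to apply Lemma \ref{stronglyrigidgor} to $M$. Indeed, $M$ is a non-zero strongly rigid module with $\gid_RM<\infty$, and $R$ is Cohen-Macaulay, so $R$ is Gorenstein. In particular $\omega_R\cong R$, hence $\Omega\omega_R=0$ and $\Hom_R(\omega_R,N)\cong N$.

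The second step uses the hypotheses on $M$ being locally free on the punctured spectrum and on $\Ext^i_R(M,N)=0$ for some $i\geq d+1$. With these, Lemma \ref{stronglyrigidfiniteid} gives $\id_RN<\infty$. Since $\Omega\omega_R=0$, the $({\bf V}^{n,s})$ condition for the pair $M,N$ collapses to the vanishing $\Ext^j_R(N,M)=0$ for $j=n,\ldots,s$, which is assumed. Together with $({\bf D}_s)$, Theorem \ref{prescribedpdhom} then yields $\depth_RN>d-n$. Alternatively, I would note that $\id_R\Hom_R(\omega_R,N)=\id_RN<\infty$ and invoke Corollary \ref{cor-e} verbatim. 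That route re-derives the Gorenstein property but costs nothing extra.

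Finally, because $R$ is Gorenstein and $\id_RN<\infty$, we get $\pd_RN<\infty$ by \cite[Exercise 3.1.25]{BH}. The Auslander-Buchsbaum formula then gives $\pd_RN=d-\depth_RN<n$.

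There is no real obstacle here; the only points requiring care are bookkeeping. First, Lemma \ref{stronglyrigidgor} must be applied before using the reduction $\Omega\omega_R=0$. Second, the canonical module hypothesis needed in Lemma \ref{stronglyrigidfiniteid} is available, since it is assumed in the statement.
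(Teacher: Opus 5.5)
Your proposal is correct and matches the paper's proof. The paper likewise obtains $\id_RN<\infty$ from Lemma \ref{stronglyrigidfiniteid} and Gorensteinness of $R$ from Lemma \ref{stronglyrigidgor}, then applies the Gorenstein case of Corollary \ref{cor-e}; your direct route through Theorem \ref{prescribedpdhom} (using $\Omega\omega_R=0$), finite projective dimension, and Auslander--Buchsbaum is simply that corollary's proof written out.
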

\begin{proof}
By Lemma \ref{stronglyrigidfiniteid}, we have $\id_RN<\infty$, whereas Lemma \ref{stronglyrigidgor} ensures that $R$ is Gorenstein.  Now, the Gorenstein case of Corollary \ref{cor-e} immediately yields $\pd_RN<n$.
\end{proof}

\begin{lemma}{\rm (\cite[Corollary 6.1]{CGZS})}\label{rigidtestpd}
Let $R$ be a local ring and let $M, N$ be non-zero finite $R$-modules. If $M$ is strongly rigid and $\Ext^i_R(N,M)=0$ for some $i\geq\depth R$, then $\pd_RN=e_R(N,M)<i$.
\end{lemma}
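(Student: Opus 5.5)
The plan is to turn the single Ext vanishing into a Tor vanishing against $M$, use strong rigidity of $M$ to get finite projective dimension for an auxiliary module, and then use depth to conclude. Let $F_\bullet\to N$ be a minimal free resolution and set $X=\Omega^{i-1}N$ and $Y=\Omega X=\Omega^iN$. Dimension shifting gives $\Ext^i_R(N,M)\cong\Ext^1_R(X,M)$. From $0\to Y\to F_{i-1}\to X\to 0$, the hypothesis $\Ext^1_R(X,M)=0$ means that the restriction map $\Hom_R(F_{i-1},M)\to\Hom_R(Y,M)$ is surjective. If $Y=0$ then $\pd_RN\le i-1$ already, so we may assume $Y\neq0$.

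\emph{Step 1: Tor vanishing.} I would use the Auslander--Bridger exact sequence
$$M\otimes_R Y^*\to\Hom_R(Y,M)\to\Tor_1^R(\mathrm{Tr}\,Y,M)\to0,$$
which is natural in $Y$. The natural map $M\otimes_R F_{i-1}^*\to\Hom_R(F_{i-1},M)$ is an isomorphism, and the map $\Hom_R(F_{i-1},M)\to\Hom_R(Y,M)$ is onto. Using naturality of $M\otimes_R(-)^*\to\Hom_R(-,M)$ along $Y\hookrightarrow F_{i-1}$, the map $M\otimes_R Y^*\to\Hom_R(Y,M)$ is then onto. Hence $\Tor_1^R(\mathrm{Tr}\,Y,M)=0$, and strong rigidity of $M$ gives $\pd_R\mathrm{Tr}\,Y<\infty$.

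\emph{Step 2: $Y$ is free.} Since $Y$ is an $i$-th syzygy, it is $i$-torsionless, so $\Ext^j_R(\mathrm{Tr}\,Y,R)=0$ for $1\le j\le i$. By the Auslander--Buchsbaum formula, $p:=\pd_R\mathrm{Tr}\,Y\le\depth R\le i$. Lemma \ref{pdfinite} (with second module $R$) gives $p=e_R(\mathrm{Tr}\,Y,R)$. Since $\Ext^j_R(\mathrm{Tr}\,Y,R)=0$ for $1\le j\le i$ and $p\le i$, this forces $p=0$, so $\mathrm{Tr}\,Y$ is free. Then $Y$, being stably $\mathrm{Tr}\,\mathrm{Tr}\,Y$ and a syzygy with no free-summand ambiguity issues beyond free modules, is free. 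Consequently $\pd_RN\le i$.

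\emph{Step 3: the strict bound.} Now $\pd_RN<\infty$ and $M\ne0$, so Lemma \ref{pdfinite} gives $\pd_RN=e_R(N,M)$. Since $\Ext^i_R(N,M)=0$, we cannot have $e_R(N,M)=i$, and together with $\pd_RN\le i$ this yields $\pd_RN=e_R(N,M)<i$.

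The main obstacle is Step 1: getting the naturality and exactness of the Auslander--Bridger sequence exactly right, including the direction of the maps and the correct index on $\Tor$. If that sequence is awkward to apply, I would instead compute $\Tor_1^R(\mathrm{Tr}\,Y,M)$ directly. Present $\mathrm{Tr}\,Y$ by $F_{i-1}^*\to F_i^*\to\mathrm{Tr}\,Y\to0$, tensor with $M$, and identify $F_j^*\otimes_R M$ with $\Hom_R(F_j,M)$. Then $\Tor_1$ becomes a homology of the complex $\Hom_R(F_\bullet,M)$ near degree $i$. This homology is controlled by the surjectivity of $\Hom_R(F_{i-1},M)\to\Hom_R(Y,M)$, i.e.\ by the vanishing of $\Ext^i_R(N,M)$.
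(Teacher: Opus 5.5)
The paper does not prove this lemma; it only quotes \cite[Corollary 6.1]{CGZS}. So your argument has to stand on its own.

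\textbf{What works.} Steps 1 and 3 are fine. The naturality argument correctly gives $\Tor_1^R(\mathrm{Tr}\,\Omega^iN,M)=0$; this is the right end of the standard exact sequence $\Ext^i_R(N,R)\otimes_RM\to\Ext^i_R(N,M)\to\Tor_1^R(\mathrm{Tr}\,\Omega^iN,M)\to 0$. Strong rigidity then gives $\pd_R\mathrm{Tr}\,Y<\infty$. Once $\pd_RN\le i$ is known, Lemma \ref{pdfinite} yields the strict bound exactly as you say.

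\textbf{The gap.} It is the sentence in Step 2 asserting that $Y$ is $i$-torsionless because it is an $i$-th syzygy. Over an arbitrary local ring this is false. An $r$-torsionless module is an $r$-th syzygy, but the converse needs Gorenstein-type hypotheses on $R$. For example, let $R=k[x,y]/(x,y)^2$ with $k$ a field, so $\depth R=0\le 2$. The module $Y=\Omega^2_Rk\cong k^{\oplus 4}$ is a second syzygy. But $k$ is not reflexive, since $k^{**}\cong k^{\oplus 4}$, so $\Ext^2_R(\mathrm{Tr}\,Y,R)\cong\mathrm{coker}(Y\to Y^{**})\neq 0$. As written, your derivation of $\Ext^j_R(\mathrm{Tr}\,Y,R)=0$ for $1\le j\le i$ uses only the syzygy property and never the finiteness of $\pd_R\mathrm{Tr}\,Y$, so it does not go through.

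\textbf{How to repair it.} The conclusion of Step 2 is true, but you must use $\pd_R\mathrm{Tr}\,Y<\infty$.

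Put $T=\mathrm{Tr}\,Y$ and $p=\pd_RT$, so $p\le\depth R\le i$. Suppose $\Ext^j_R(T,R)\neq0$ for some $j\ge1$. Take $\mathfrak p$ minimal in $\bigcup_{j\ge1}\mathrm{Supp}\,\Ext^j_R(T,R)$, and set $A=R_{\mathfrak p}$ and $q=\pd_AT_{\mathfrak p}$. Then all $\Ext^j_A(T_{\mathfrak p},A)$ with $j\ge1$ have finite length, $1\le q\le\min\{p,\depth A\}$, and $\Ext^q_A(T_{\mathfrak p},A)\neq 0$.

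A minimal free resolution $Q_\bullet$ of $T_{\mathfrak p}$ gives a complex $0\to\mathrm{Tr}_AT_{\mathfrak p}\to Q_2^*\to\cdots\to Q_q^*\to 0$, placed in degrees $1,\dots,q$. Its cohomology is $\Ext^j_A(T_{\mathfrak p},A)$ for $1\le j\le q$. Every term has depth at least $\min\{i,\depth A\}\ge q$:
\begin{itemize}
\item for the $Q_j^*$ this is clear;
\item $\mathrm{Tr}_AT_{\mathfrak p}$ agrees up to free summands with $Y_{\mathfrak p}$, which up to a free summand is an $i$-th syzygy over $A$. This is where $i\ge\depth R$ really enters.
\end{itemize}
Since the terms sit in degrees $\ge 1$ and have depth $\ge q$, the hypercohomology of the complex with respect to $H^\bullet_{\mathfrak pA}$ vanishes in degrees $\le q$. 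Because the cohomology has finite length, that hypercohomology coincides with the cohomology of the complex. Hence $\Ext^q_A(T_{\mathfrak p},A)=0$, a contradiction.

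Therefore $\Ext^j_R(T,R)=0$ for all $j\ge1$, so $p=e_R(T,R)=0$ and $T$ is free. Then $Y$ is free as you note, and your Step 3 finishes the proof.

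\textbf{A smaller point.} Your dimension shift presupposes $i\ge 1$. The case $i=0$, which can occur only when $\depth R=0$, is vacuous but deserves a line. If some $x\in\mathfrak m$ were $M$-regular, then $\Tor_1^R(M,R/xR)=0$. Strong rigidity and Auslander--Buchsbaum would then make $R/xR$ free, which is absurd. So $\depth_RM=0$, and hence $\Hom_R(N,M)\neq 0$ for every $N\neq 0$.
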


\begin{corollary}\label{cor-finite-pd}
Let $R$ be a Gorenstein local ring of dimension $d$ and let $N$ be a finite $R$-module. Suppose there exists a non-zero strongly rigid $R$-module $M$ with $\Ext^i_R(N,M)=0$ for some $i\geq d$. If, in addition, $\Ext^j_R(N,M)=0$ for all $j=n, \ldots, s$ and $({\bf D}_{s})$ holds for the pair $M, N$, for some positive integers $n\leq s\leq d$, then $\pd_RN<n$.
\end{corollary}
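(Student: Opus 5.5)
The plan is to reduce to Theorem \ref{prescribedpdhom}, or more precisely to its Gorenstein specialization Corollary \ref{cor-e}. Lemma \ref{rigidtestpd} supplies the finite projective dimension of $N$ that we need along the way. Note first that $N\neq0$ may be assumed, since otherwise there is nothing to prove.

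First, $R$ is Gorenstein, so $\depth R=d$. The hypothesis gives $\Ext^i_R(N,M)=0$ for some $i\geq d$. Since $M$ is nonzero and strongly rigid, Lemma \ref{rigidtestpd} applies and yields $\pd_RN<\infty$, together with the equality $\pd_RN=e_R(N,M)$.

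Next we need $\id_RN<\infty$, which holds because over a Gorenstein ring finite projective dimension and finite injective dimension coincide for finite modules (\cite[Exercise 3.1.25]{BH}). Moreover $\omega_R\cong R$, so $\Hom_R(\omega_R,N)\cong N$, and this module also has finite injective dimension. Finally, $\Omega\omega_R=0$, so the $({\bf V}^{n,s})$ condition for the pair $M,N$ reduces to $\Ext^j_R(N,M)=0$ for $j=n,\ldots,s$, which is assumed.

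At this point every hypothesis of Corollary \ref{cor-e} holds: we have $\id_RN<\infty$, $\id_R\Hom_R(\omega_R,N)<\infty$, the required Ext vanishing, and $({\bf D}_s)$ for the pair $M,N$. That corollary gives $\pd_RN<n$.

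Alternatively, we can apply Theorem \ref{prescribedpdhom} directly to get $\depth_RN>d-n$. The Auslander--Buchsbaum formula then gives $\pd_RN=d-\depth_RN<n$. Combined with $\pd_RN=e_R(N,M)$, this also shows $e_R(N,M)<n$; this does not need $\gid_RM<\infty$, although that condition holds anyway since $R$ is Gorenstein.

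The only delicate point is the identification of $({\bf V}^{n,s})$ in the Gorenstein case. One must check that, with $\Omega\omega_R=0$, the second vanishing condition is vacuous, and that the proof of Theorem \ref{prescribedpdhom} (see Remark \ref{remarkhom}) only uses $\Ext^j_R(\Hom_R(\omega_R,N),M)\cong\Ext^j_R(N,M)=0$. Everything else is a direct citation.
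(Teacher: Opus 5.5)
Your proposal is correct and is the paper's proof. Lemma \ref{rigidtestpd} (using $\depth R=d$) gives $\pd_RN<\infty$, hence $\id_RN<\infty$ over the Gorenstein ring, and Corollary \ref{cor-e} then yields $\pd_RN<n$. Your extra checks — the case $N=0$, that $\Hom_R(\omega_R,N)\cong N$, and that $\Omega\omega_R=0$ reduces $({\bf V}^{n,s})$ to the assumed vanishing — make explicit what the paper leaves implicit.
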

\begin{proof}
Lemma \ref{rigidtestpd} gives $\pd_RN<\infty$, hence $\id_RN<\infty$. The result now follows by Corollary \ref{cor-e}.
\end{proof}

\begin{definition}\label{weakly-def}\rm (\cite[Definition 2.1]{CK}) Let $(R, \mathfrak{m})$ be a local ring and let $I, J$ be ideals of $R$. We say that $I$ is {\it weakly $\mathfrak{m}$-full with respect to $J$} provided that $I:_RJ = \mathfrak{m}I:_R\mathfrak{m}J$. In case $J=R$, i.e. if $I=\mathfrak{m}I:_R\mathfrak{m}$, then $I$ is simply said to be {\it weakly $\mathfrak{m}$-full}.
\end{definition}

For example, if ${\rm depth}\,R>0$ then all integrally closed ideals of $R$ are weakly $\mathfrak{m}$-full with respect to $\mathfrak{m}^s$ for each $s\geq 0$ (see \cite[Proposition 2.4]{CK}).

\begin{lemma}{\rm (\cite[Corollary 2.14]{CK})}\label{weaklymfullpdfinite}
Let $(R, \mathfrak{m})$ be a non-regular local ring with $\depth R>0$, and let $I$ be an $\mathfrak{m}$-primary ideal of $R$ such that $I$ is weakly $\mathfrak{m}$-full with respect to $\mathfrak{m}^{\nu}$ and $I\subset\mathfrak{m}^{\nu +1}$, for some $\nu \geq0$ {\rm (}note the case $\nu = 0$ means that $I$ is  weakly $\mathfrak{m}$-full{\rm )}. In addition, let $N$ be a finite $R$-module and $t\geq 1$ be an integer. If $\Tor^R_t(R/I, N)=0$, then $\pd_RN<t$.
\end{lemma}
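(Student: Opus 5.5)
The plan is to reduce the vanishing $\Tor^R_t(R/I,N)=0$ to the vanishing of $\Tor^R_t(k,N)$ (or of some $\Tor^R_j(k,N)$ with $j\leq t$), where $k=R/\mathfrak{m}$. Once that is done, the conclusion follows from the minimal free resolution of $N$. Indeed, $\Tor^R_j(k,N)=0$ means the $j$th Betti number of $N$ vanishes. The resolution then stops, so $\pd_RN<j\leq t$.

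\emph{Step 1.} Exploit the condition $I:_R\mathfrak{m}^{\nu}=\mathfrak{m}I:_R\mathfrak{m}^{\nu+1}$. Write $L=\mathfrak{m}I:_R\mathfrak{m}^{\nu+1}$. The aim is to show that $L/\mathfrak{m}I$, or more usefully the image of $\mathfrak{m}^{\nu}L$ in $I/\mathfrak{m}I$, gives a nonzero $k$-direct summand of a suitable module built from $R/I$. Concretely, one considers the multiplication maps $R/I\to I/\mathfrak{m}I$ (or $\mathfrak{m}^\nu\otimes R/I$). One shows that the weak fullness together with $I\subset\mathfrak{m}^{\nu+1}$ forces $k$ to split off a syzygy of $R/I$, for instance $\Omega_R(R/I)/\mathfrak{m}\,\Omega_R(R/I)$ lifted to an honest module. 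The inclusion $I\subset \mathfrak{m}^{\nu+1}$ should guarantee that the relevant map is nonzero. The hypotheses $\depth R>0$ and $I$ being $\mathfrak{m}$-primary should guarantee, via a general element $x\in\mathfrak{m}$ regular on $R$, that the colon $I:_Rx$ behaves like $I:_R\mathfrak{m}$ up to the prescribed power of $\mathfrak{m}$. This is the usual mechanism behind $\mathfrak{m}$-full ideals, where $\mathfrak{m}I:x=I$.

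\emph{Step 2.} Having a split map $k\to \Omega^{r}_R(R/I)$, or $k$ a direct summand of $\Omega^{r}_R(R/I)\otimes_R R/xR$ for some small $r$ (expected $r=1$), transport the Tor vanishing. Use $\Tor^R_t(R/I,N)\cong\Tor^R_{t-r}(\Omega^{r}_R(R/I),N)$ for $t>r$, and handle the boundary cases $t\leq r$ directly via $I\otimes N$. If the splitting only happens modulo a regular element $x$, use the change of rings long exact sequence for $R\to R/xR$. Then deduce that $\Tor^R_{t}(k,N)$, or a Tor of $k$ in degree at most $t$, vanishes. Non-regularity of $R$ enters to exclude degenerate situations: there $k$ itself has finite projective dimension, and the splitting argument would be vacuous.

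\emph{Main obstacle.} I expect the main difficulty to be Step 1: producing the direct summand $k$ with the correct homological shift so that the degree stays at most $t$ rather than $t+1$. My first attempt will be the exact sequence $0\to I/\mathfrak{m}I\to R/\mathfrak{m}I\to R/I\to 0$ together with the socle element argument. If the shift comes out wrong by one, I would then fall back on the rigidity in Lemma \ref{rigidtestpd}: first show that $R/I$ is strongly rigid, which gives $\pd_RN<\infty$. Then compare $\Tor$ with $e_R$ via Auslander--Buchsbaum to obtain the sharp bound $\pd_RN<t$.
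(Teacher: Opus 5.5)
The paper gives no argument of its own for this lemma; it quotes it from \cite[Corollary 2.14]{CK}, where these ideals are treated as a class of Burch ideals. Taken on its own terms, your outline has a genuine gap.

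\textbf{Step 1.} As first stated, Step 1 cannot succeed. Since $\depth R>0$, every nonzero syzygy $\Omega^r_R(R/I)$ with $r\geq 1$ embeds in a free module. It therefore has positive depth and can never have $k$ as a direct summand. The version modulo an element is attainable, but not by the mechanism you describe. A single $x$ with $\mathfrak{m}I:_Rx=I$ is the $\mathfrak{m}$-full situation, and weak fullness does not provide one. What does work is the following.
\begin{itemize}
\item The inclusion $I\subseteq\mathfrak{m}^{\nu+1}$, together with $\mathfrak{m}^{\nu}\neq 0$, gives some $a\in(I:_R\mathfrak{m}^{\nu+1})\setminus(I:_R\mathfrak{m}^{\nu})$.
\item Weak fullness then says $a\mathfrak{m}^{\nu+1}\not\subseteq\mathfrak{m}I$.
\item Peeling off $\nu$ factors produces $a'\in I:_R\mathfrak{m}$ and $y\in\mathfrak{m}$ with $a'y\notin\mathfrak{m}I$; that is, $I$ is Burch.
\item By prime avoidance the witness $y$ can be taken $R$-regular, and then $R\,\overline{a'y}\cong k$ is a direct summand of $I/yI$.
\end{itemize}

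\textbf{Step 2.} This is the decisive failure. Use $0\to I\xrightarrow{y}I\to I/yI\to 0$ and $\Tor^R_{t-1}(I,N)\cong\Tor^R_t(R/I,N)=0$ (take $t\geq 2$). This gives $\Tor^R_t(I/yI,N)\cong\Tor^R_{t+1}(R/I,N)/y\Tor^R_{t+1}(R/I,N)$. So this route kills $\Tor^R_t(k,N)$ only if $\Tor^R_{t+1}(R/I,N)=0$ as well. In other words, it reproduces Burch's classical theorem with two consecutive vanishings, not the one-vanishing statement.

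Passing to $R/yR$ computes the same modules and does not help. Aiming at $\Tor^R_j(k,N)=0$ for some $j<t$ is hopeless: any $N$ with $\pd_RN=t-1$ satisfies the hypothesis, while $\Tor^R_j(k,N)\neq 0$ for all $j\leq t-1$. Upgrading a single vanishing is exactly the content of the cited result, and it is where $\depth R>0$ and $\mathfrak{m}$-primariness must genuinely enter. Your outline supplies nothing for this step.

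\textbf{The fallback.} The fallback is circular. The paper deduces strong rigidity of $R/I$ from this very lemma. Lemma \ref{rigidtestpd} concerns $\Ext_R(N,M)$ with $M$ strongly rigid, not $\Tor$. Even knowing $\pd_RN<\infty$, Auslander--Buchsbaum alone does not turn $\Tor^R_t(R/I,N)=0$ into $\pd_RN<t$. That would require $\Tor^R_j(R/I,N)\neq 0$ for all $1\leq j\leq \pd_RN$. Burch's argument gives only that $\Tor^R_j$ and $\Tor^R_{j+1}$ cannot both vanish in that range.
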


Note Lemma \ref{weaklymfullpdfinite} implies that $R/I$ is a strongly rigid $R$-module. This will be used in the result below.

\begin{proposition}\label{weakly-strongly}
Let $(R, \mathfrak{m})$ be a Gorenstein non-regular local ring of  dimension $d$, and let $I$ be an $\mathfrak{m}$-primary ideal of $R$ such that $I$ is weakly $\mathfrak{m}$-full with respect to $\mathfrak{m}^{\nu}$ and $I\subset\mathfrak{m}^{\nu + 1}$, for some $\nu \geq 0$. Consider positive integers $n\leq s\leq d$, and let $N$ be a finite $R$-module such that $\Ext^j_R(N, R/I)=0$ for all $j=n, \ldots, s$. Assume that the pair $R/I, N$ satisfies the  $({\bf D}_{s})$ condition. Then, $\pd_RN<n$ if any one of the following assertions is satisfied:
\begin{itemize}
\item [(i)] $\Tor^R_t(R/I, N)=0$ for some $t\geq 1$;
    \item [(ii)] $\Ext^i_R(R/I, N)=0$ for some $i\geq d+1$;
    \item [(iii)]$\Ext^i_R(N, R/I)=0$ for some $i\geq d$.
\end{itemize}
\end{proposition}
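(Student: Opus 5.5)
Throughout, set $M=R/I$; this is a non-zero module, since $I$ is $\mathfrak{m}$-primary and hence proper. Because $R$ is Gorenstein and non-regular of dimension $d$, its depth is $d$. If $d=0$ there is nothing to prove, since no positive $n\le s\le d$ exists. So we may assume $\depth R=d>0$, and Lemma \ref{weaklymfullpdfinite} applies. As noted just before the statement, Lemma \ref{weaklymfullpdfinite} shows that $M$ is strongly rigid. In each case the plan is to reduce to Corollary \ref{cor-e}, or to one of Corollaries \ref{stronglyrigidfinitepd} and \ref{cor-finite-pd}. The standing hypotheses are already of the required shape: $\Ext^j_R(N,M)=0$ for $j=n,\ldots,s$, and $({\bf D}_s)$ holds for the pair $M,N$.

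\emph{Case (i).} Lemma \ref{weaklymfullpdfinite} directly gives $\pd_RN<t<\infty$. Since $R$ is Gorenstein, $\pd_RN<\infty$ implies $\id_RN<\infty$. Also $\omega_R\cong R$, so $\Hom_R(\omega_R,N)\cong N$ has finite injective dimension as well. All hypotheses of Corollary \ref{cor-e} now hold, and it yields $\pd_RN<n$.

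\emph{Case (ii).} Here I would invoke Corollary \ref{stronglyrigidfinitepd}, which requires three things.
\begin{itemize}
\item[(1)] $M$ is locally free on the punctured spectrum. This holds because $I$ is $\mathfrak{m}$-primary, so $M_{\mathfrak p}=0$ for every $\mathfrak p\neq\mathfrak m$.
\item[(2)] $\gid_RM<\infty$. This holds since $R$ is Gorenstein, by the remark preceding Lemma \ref{sa}.
\item[(3)] $\Ext^i_R(M,N)=0$ for some $i\ge d+1$. This is exactly hypothesis (ii).
\end{itemize}
Together with the standing hypotheses, Corollary \ref{stronglyrigidfinitepd} then gives $\pd_RN<n$. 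Alternatively, Lemma \ref{stronglyrigidfiniteid} gives $\id_RN<\infty$, which over a Gorenstein ring gives $\pd_RN<\infty$, and one finishes with Corollary \ref{cor-e} as in case (i).

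\emph{Case (iii).} Corollary \ref{cor-finite-pd} applies verbatim. Its ring is Gorenstein, $M$ is non-zero and strongly rigid, and $\Ext^i_R(N,M)=0$ for some $i\ge d$, so $\pd_RN<n$.

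The only step needing any care is checking that the hypotheses of the lemmas are genuinely satisfied: $\depth R>0$, $M\neq0$, local freeness of $M$ on the punctured spectrum, and the finite injective dimension of $\Hom_R(\omega_R,N)$ via $\omega_R\cong R$. Beyond that, the proof is a direct assembly of the earlier results.
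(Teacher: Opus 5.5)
Your proposal is correct and matches the paper's proof step for step. Case (i) goes through Lemma \ref{weaklymfullpdfinite} and then Corollary \ref{cor-e}, and cases (ii) and (iii) go through Corollaries \ref{stronglyrigidfinitepd} and \ref{cor-finite-pd}, after checking that $R/I$ is strongly rigid, locally free on the punctured spectrum, and of finite Gorenstein injective dimension. Your extra checks spell out small points the paper leaves implicit: the case $d=0$ is vacuous, so $\depth R>0$ as Lemma \ref{weaklymfullpdfinite} requires, and $\Hom_R(\omega_R,N)\cong N$ has finite injective dimension.
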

\begin{proof} In case (i) holds, Lemma \ref{weaklymfullpdfinite} yields $\pd_RN<t<\infty$, and hence $\id_RN<\infty$, which gives $\pd_RN<n$ by Corollary \ref{cor-e}. So it remains to prove the result in the other two cases. As already pointed out, $R/I$ is strongly rigid as an $R$-module. Note $\gid_RR/I<\infty$ because $R$ is Gorenstein.  Moreover, since in particular $I$ is $\mathfrak{m}$-primary, we have $(R/I)_{\mathfrak{p}}=0$ for all prime ideals $\mathfrak{p}\neq \mathfrak{m}$ and hence, trivially, $R/I$ is locally free (of rank zero) on the punctured spectrum of $R$. Now if (ii) (resp. (iii)) holds then we get $\pd_RN<n$ by Corollary \ref{stronglyrigidfinitepd} (resp. Corollary \ref{cor-finite-pd}).
\end{proof}

Clearly, criteria for the freeness of $N$ can be readily seen by taking $n=1$ in the above proposition. We close the paper with a few more comments.

\begin{remark}\rm (a) In the case that (i) holds, the result (together with Lemma \ref{weaklymfullpdfinite}) in fact yields $\pd_RN<\min\{t,n\}$.

\medskip

(b) In order to make Proposition \ref{weakly-strongly} feasible, an obstruction on the shape of $N$ must be taken into account. Precisely, $N$ cannot be of the form
$\mathfrak{m}^kN'$, where $N'$ is a non-zero finite $R$-module and $k\geq 1$ is an integer. Indeed, suppose by way of contradiction that the module $\mathfrak{m}^kN'$ fits into the hypotheses of the theorem. Then we would get  $\pd_R\mathfrak{m}^kN'<n<\infty$, which by \cite[Theorem 1.1]{LV} is equivalent to $R$ being regular. This violates our choice of $R$. 

\medskip

(c) The case $n=s=d$ of Proposition \ref{weakly-strongly} gives that the ideal $\mathfrak{m}$ must contain an $N$-regular element if $\Ext^d_R(N, R/I)=0$.

\end{remark}

\bigskip

\noindent{\bf Acknowledgements.} The second-named author was partially supported by CNPq (grants 406377/2021-9, 313357/2023-4 and 408698/2023-3). The authors thank the anonymous referee for her/his interesting suggestions, in particular a request for examples of the condition $({\bf D}_n)$, which led us to produce Example \ref{exs}.


\begin{thebibliography}{9}
\bibliographystyle{alpha}


\bibitem{Ara}
T. Araya, O. Celikbas, A. Sadeghi, R. Takahashi, {\it On the vanishing of self-extensions over Cohen-Macaulay local rings}, Proc. Amer. Math. Soc. {\bf 146} (2018), 4563--4570.

\bibitem{AB}
L. L. Avramov, R.-O. Buchweitz, {\it Support varieties and cohomology over complete intersections}, Invent. Math. {\bf 142} (2000), 285--318.


\bibitem{BH}
W. Bruns, J. Herzog, {\it Cohen–Macaulay rings}, Revised Edition, Cambridge Univ. Press, 1998.



\bibitem{CK} O. Celikbas, T. Kobayashi, {\it On a class of Burch ideals and a conjecture of Huneke and Wiegand}, Collect. Math. (2021), doi:10.1007/s13348-021-00315-8.

\bibitem{Cr00} L. W. Christensen, {\it Gorenstein dimensions}, Lecture Notes in Math. {\bf 1747}, Springer-Verlag, Berlin, 2000.  

\bibitem{CFH} L. W. Christensen, H.-B. Foxby, H. Holm, {\it Beyond totally reflexive modules and back. A survey on Gorenstein dimensions}, in: {\it Commutative Algebra, Noetherian and Non-Noetherian
Perspectives}, Springer, New York, 2011.

\bibitem{Dao} H. Dao, {\it Some observations on local and projective hypersurfaces}, Math. Res. Lett. {\bf 15} (2008), 207--219.

\bibitem{EJ}
E. E. Enochs, O. M. G. Jenda, {\it Gorenstein injective and projective modules}, Math. Z. {\bf 220} (1995), 611--633.

\bibitem{EJ2}
E. E. Enochs, O. M. G. Jenda, {\it Gorenstein balance of Hom and tensor}, Tsukuba J. Math. {\bf 19} (1995), 1--13.


\bibitem{Fox} H.-B. Foxby, {\it Isomorphisms between complexes with applications to the homological theory of
modules}, Math. Scand. {\bf 40} (1977), 5--19.

\bibitem{FJMS}
T. H. Freitas, V. H. Jorge-P\'erez, C. B. Miranda-Neto, P. Schenzel, \emph{Generalized local duality, canonical modules, and prescribed bound on projective dimension}, J. Pure Appl. Algebra {\bf 227} (2023), 107188.

\bibitem{H} J. Herzog, {\it Komplexe, Aufl\"osungen und Dualit\"at in
	der lokalen Algebra}, Habilitationsschrift, Germany, Universit\"at Regensburg, 1970.

\bibitem{HZ}
J. Herzog, N. Zamani, {\it Duality and vanishing of generalized local cohomology}, Arch. Math. {\bf 81} (2003), 512--519.



\bibitem{Is} F. Ischebeck, {\it Eine Dualit\"at zwischen den Funktoren Ext und Tor}, J. Algebra {\bf 11} (1969), 510--531.


\bibitem{J}
D. A. Jorgensen, {\it Finite projective dimension and the vanishing of $\Ext_R(M,M)$}, Comm. Algebra {\bf 36} (2008), 4461--4471.





\bibitem{LV} G. Levin, W. V. Vasconcelos, {\it Homological dimensions and Macaulay rings}, Pacific J. Math. {\bf 25} (1968), 315--323.

\bibitem{M}
H. Matsumura, {\it Commutative ring theory}, Translated from the Japanese by M. Reid, Second edition, Cambridge Studies in Advanced Mathematics \textbf{8}, Cambridge University Press, Cambridge, 1989.

\bibitem{Sa}
R. Sazeedeh, {\it Gorenstein injective modules and a generalization of Ischebeck formula}, J. Algebra Appl. {\bf 12} (2013), 1250197.


\bibitem{Soto} A. J. Soto Levins, {\it A rigidity theorem for Ext}, J. Commut. Algebra {\bf 16} (2024), 115--122.

\bibitem{S}
N. Suzuki, {\it On the generalized local cohomology and its duality}, J. Math. Kyoto Univ. {\bf 18} (1978), 71--85.



\bibitem{Y}
K. Yoshida, (1998), {\it Tensor products of perfect modules and maximal surjective Buchsbaum modules}, J. Pure Appl. Algebra {\bf 123} (1998), 313--323.

\bibitem{CGZS}
 M. R. Zargar, O. Celikbas, M. Gheibi, A. Sadeghi, {\it Homological dimensions of rigid modules}, Kyoto J. Math. {\bf 58} (2018), 639--669.


\end{thebibliography}
\end{document}